\documentclass[12pt]{article}
\textheight 225mm \textwidth 168mm \topmargin -1.0cm
\oddsidemargin 2cm \evensidemargin 2cm \hoffset=-3cm

\usepackage{amsthm}
\usepackage{amsmath}
\usepackage{enumerate}
\usepackage{amssymb}
\usepackage{latexsym}
\usepackage{amsfonts}
\usepackage{color}
\usepackage{mathrsfs}
\usepackage{epsfig}
\newtheorem{theorem}{\bf Theorem}[section]
\newtheorem{proposition}[theorem]{\bf Proposition}
\newtheorem{definition}[theorem]{\bf Definition}
\newtheorem{corollary}[theorem]{\bf Corollary}

\newtheorem{lemma}[theorem]{\bf Lemma}

\newsavebox{\savepar}

\pagestyle{myheadings}
\begin{document}
	\title{Singular Fractional Choquard Equation with a Critical Nonlinearity and a Radon measure}
	\author{\small Akasmika Panda\footnote{akasmika444@gmail.com}$^{~,1}$, Debajyoti Choudhuri\footnote{dc.iit12@gmail.com}$^{~,1}$ \&  Kamel Saoudi\footnote{kmsaoudi@iau.edu.sa (Corresponding author)},$^{\,,2}$\\
	\small{$^1$\it Department of Mathematics, National Institute of Technology Rourkela, India}\\
	\small{$^2$\it Basic and Applied Scientifc Research Center, Imam Abdulrahman Bin Faisal University,,}\\ \small{\it P.O. Box 1982, 31441, Dammam, Saudi Arabia}}
		\date{}
		\maketitle
		\begin{abstract}
\noindent This article concerns about the existence of a positive SOLA (Solutions Obtained as Limits of Approximations) for the following singular critical Choquard problem  involving fractional power of Laplacian and a critical Hardy potential.
			\begin{equation}
			\begin{split}
			(-\Delta)^su-\alpha \frac{u}{|x|^{2s}}&=\lambda u+ u^{-\gamma}+\beta \left(\int_{\Omega}\frac{u^{2_b^*}(y)}{|x-y|^b}dy\right)u^{2_b^*-1}+\mu ~\text{in}~\Omega,\\
			u&>0~\text{in}~\Omega,\\
			u&= 0~\text{in}~\mathbb{R}^N\setminus\Omega.
			\end{split}
			\end{equation}
Here, $\Omega$ is a bounded domain of $\mathbb{R}^N$, $s\in (0,1)$, $\alpha,\lambda$ and $\beta$ are positive real parameters, $N>2s$, $\gamma\in (0,1)$, $0<b<\min\{N,4s\}$, $2_b^*=\frac{2N-b}{N-2s}$ is the critical exponent in the sense of Hardy–Littlewood–Sobolev inequality and $\mu$ is a bounded Radon measure in $\Omega$.\\
			\textbf{Keywords:} Choquard equation, Fractional Sobolev spaces, Radon measure, Marcinkiewicz space, Hardy potential.\\
			\textbf{AMS Classification:}  35J60, 35R11, 35A15.
		\end{abstract}
		\section{Introduction}\label{introduction}
		The nonlocal problem involving fractional Laplacian has a remarkable contribution in various fields of science. The fractional Laplacian arise in chemical reactions in liquids, diffusion in plasma, geophysical fluid dynamics, electromagnetism and is the infinitesimal generator of L\'{e}vy stable diffusion process, see \cite{Applebaum} for instance. Therefore, a considerable amount of research is carried out by a numerous scientists, engineers, mathematicians with equal interest. Elliptic PDEs involving singular nonlinearity have been studied by many authors, refer \cite{Boccardo,Canino,Haitao,Lazer,Sun} and the references therein. In all these referred works the authors have proved the existence of solution to the singular problem with approximation arguments and the solution space depends on the power of the singular term. Recently, Sun \& Zhang in \cite{Sun} explained the role of power 3 for elliptic equations with negative exponents and claimed that for exponent greater than 3, the problem does not possess a solution. Similarly, problems involving a Radon measure as a non homogeneous term are also treated with approximations, since we can always approximate a Radon measure by sequence of smooth functions. Thus, in this case one can expect the solution space with lesser degree of differentiability or/and integrability. For example, Boccardo et al. (\cite{Boccardo 2},\cite{Boccardo 1}) proved the existence of solution in $W^{1,m}_0(\Omega)$ for every $m<\frac{N(p-1)}{N-1}$ for a problem involving $p$-Laplacian and a Radon measure. Later, Kuusi et al. \cite{Kuusi} extended the work of Boccardo to fractional $p$-Laplacian set up and guaranteed a solution in $W^{\bar{s},m}(\Omega)$ for every $\bar{s}<s<1$, $m<\min\{\frac{N(p-1)}{N-s},p\}$. Further search of the literature led us to find similar problems but consisting of both a singularity and a Radon measure. The local case (with Laplace operator) of such problems has been dealt by Panda et al. in \cite{Panda} and the corresponding problem admits a weak solution in $W_0^{1,m}(\Omega)$ if $\gamma\in (0,1]$  and in $W_{loc}^{1,m}(\Omega)$ if $\gamma>1$ for all $m<\frac{N}{N-1}$. The nonlocal case (with fractional Laplacian) with a singularity and a Radon measure has been studied by Ghosh et al. in \cite{Ghosh}.\\
	In this paper we will consider the following singular fractional elliptic problem with a Choquard type critical nonlinearity and a Radon measure. The motivation to consider this work has been mentioned towards the end of this section.
		\begin{equation}\label{4p1}\tag{$P_\beta$}
		\begin{split}
		(-\Delta)^su-\alpha \frac{u}{|x|^{2s}}&=\lambda u+ u^{-\gamma}+\beta \left(\int_{\Omega}\frac{u^{2_b^*}(y)}{|x-y|^b}dy\right)u^{2_b^*-1}+\mu ~\text{in}~\Omega,\\
		u&>0~\text{in}~\Omega,\\
		u&= 0~\text{in}~\mathbb{R}^N\setminus\Omega,
		\end{split}
		\end{equation}
			where $\Omega$ is a bounded domain in $\mathbb{R}^N$ with $C^2$ boundary, $s\in(0,1)$, $N>2s$, $0<\gamma<1$, $\alpha,\beta,\lambda>0$, $b<\min\{N,4s\}$, $\mu$ is a bounded Radon measure and $(-\Delta)^s$ is the fractional Laplacian defined by 
			$$(-\Delta)^su=\text{P. V.}\int_{\mathbb{R}^N}\frac{u(x)-u(y)}{|x-y|^{N+2s}}dy.$$
Nonlinear problems involving a Choquard term draw its motivation from the Hardy-Littlewood-Sobolev inequality. Buffoni in \cite{Buffoni}, considered the following Choquard problem and shown the existence of a ground state solution. 
\begin{equation}\label{choquard term}
(-\Delta)u+V(x)u=\left(\frac{1}{|x|^b}*|u|^p\right)|u|^{p-2}u~~\text{in}~\mathbb{R}^N
\end{equation}
for $p>1$ and $N\geq3$.	S. Pekar in \cite{Pekar} studied the problem $\eqref{choquard term}$ for $p=2$ and $b=1$ as a physical model and described the quantum theory of a polaron at rest. Later, P. Choquard \cite{Choquard} used the Choquard problem of type $\eqref{choquard term}$ for the modeling of one component plasma. The nonlocal Choquard problem, i.e. the Choquard problem with fractional Laplacian, is known as nonlinear fractional Schrödinger equation with Hartree-type nonlinearity. These problems have a wide application in the quantum mechanical theory, mean field limit of weakly interacting molecules, physics of multi particle systems, etc. One can refer \cite{Chen, Liu} and the references therein for further study of fractional Choquard problem.\\
The Brezis–Nirenberg type critical Choquard problem in a bounded domain $\Omega$, that is
\begin{equation}
\begin{split}
-\Delta u&=\lambda u+ \left(\int_{\Omega}\frac{|u|^{2_b^*}(y)}{|x-y|^b}dy\right)|u|^{2_b^*-2}u ~\text{in}~\Omega,\\
u&= 0~\text{in}~\mathbb{R}^N\setminus\Omega,
\end{split}
\end{equation}
has been studied by \cite{Gao,Tang,Giacomoni Hardy}, etc. Gao \& Yang in \cite{Gao} proved the existence, nonexistence and multiplicity results for a range of $\lambda$. \\
Recently, Giacomoni et al. in \cite{Giacomoni arxiv} dealt with fractional critical Choquard problem with singular nonlinearity, i.e. they considered problem ($P_\beta$) with $\alpha,\lambda=0$ and without the Radon measure $\mu$. In \cite{Giacomoni arxiv}, the authors have explained a very weak comparison principle, established the existence of two positive weak solution and discussed about the Sobolev regularity of the solutions.\\
The problem $(P_\beta)$ involves two critical terms, the Hardy potential in the left hand side and the Choquard nonliner term in the right hand side. The nonlocal problems with a Hardy critical potential have been recently treated in \cite{Barrios,Fall,Fiscella}, etc. In 2016, Fiscella \& Pucci in \cite{Fiscella} studied the following problem with a Hardy term and proved the existence of multiple solutions with the explanation of the asymptotic behavior of solutions.
	\begin{equation}
\begin{split}
(-\Delta)^su-\alpha \frac{u}{|x|^{2s}}&=\lambda u+\theta f(x,u)+g(x,u) ~\text{in}~\Omega,\\
u&= 0~\text{in}~\mathbb{R}^N\setminus\Omega,
\end{split}
\end{equation}
where the function $f$ appears with a sub critical growth while $g$ could be either a critical term or a perturbation.\\
Motivated by the above works, in this paper, we discuss the problem $(P_\beta)$ in a bounded domain. To the best of our knowledge, this work is novel, even for the local case (i.e. for $s=1$), in the sense that in the literature there is no contribution whatsoever which indicates a study on problem involving a singular nonlinearity, Hardy potential, Choquard nonlinearity and a measure data together. We find a very less number of articles dealing with singular fractional problem with critical exponent and measure data. Amongst them, Panda et al. in \cite{Panda 1} have considered the following problem and obtained a positive SOLA via a sequence of approximating problems.
\begin{equation}
\begin{split}
(-\Delta)^su&= \frac{1}{u^\gamma}+\lambda u^{2_s^*-1}+\mu ~\text{in}~\Omega,\\
u&>0~\text{in}~\Omega,\\
u&= 0~\text{in}~\mathbb{R}^N\setminus\Omega.
\end{split}
\end{equation}
We have extended the work in \cite{Panda 1} by considering a Hardy potential and a Choquard type critical nonlinearity in $(P_\beta)$. We show the existence of a SOLA to our problem with the method of approximations. We follow the approach closely related to the approaches used in $\cite{Fiscella}$, \cite{Giacomoni arxiv} and \cite{Panda 1}. \\
\noindent Turning to the paper organization: In Section $\ref{2}$, we provide some functional settings, introduce a suitable notion of solution (SOLA) to ($P_\beta$) and further state some auxiliary results and main results. In Section $\ref{sec 2}$, we show that the approximating problem to ($P_\beta$) admits a positive weak solution for a certain range of $\beta$. Finally, in Section $\ref{sec 3}$, we prove our main result, i.e. Theorem $\ref{main theorem}$, and guarantee the existence of a SOLA to $(P_\beta)$.
\section{Functional settings and auxiliary results}\label{2}
The fractional Sobolev space $W^{s,p}(\mathbb{R}^N)$, for $1\leq p<\infty$ and for $s\in(0,1)$, is defined as
$$W^{s,p}(\mathbb{R}^N)=\left\{u\in L^p(\mathbb{R}^N):\int_{\mathbb{R}^{2N}}\frac{{|u(x)-u(y)|^p}}{|x-y|^{N+sp}}dxdy<\infty\right\}$$ and 
$$W_0^{s,p}(\Omega)=\{u\in W^{s,p}(\mathbb{R}^N): \int_{\mathbb{R}^{2N}}\frac{|u(x)-u(y)|^{p}}{|x-y|^{N+sp}}dydx<\infty,~u=0 \text{ in }\mathbb{R}^N\setminus\Omega\}$$ is a reflexive subspace of $W^{s,p}(\mathbb{R}^N)$ endowed with the following norm $$\|u\|_{W_0^{s,p}(\Omega)}^p=\int_{\mathbb{R}^{2N}}\frac{|u(x)-u(y)|^{p}}{|x-y|^{N+sp}}dydx.$$
\noindent Further, for $p=2$, we denote the space $W^{s,p}(\mathbb{R}^N)$ as $H^{s}(\mathbb{R}^N)$ and $W_0^{s,p}(\Omega)$ as $H_0^{s}(\Omega)$. Actually, $H_0^{s}(\Omega)$ is the completion of $C_0^\infty(\Omega)$ with respect to the following norm  $$\|u\|_{H_0^{s}(\Omega)}^2=\int_{\mathbb{R}^{2N}}\frac{|u(x)-u(y)|^{2}}{|x-y|^{N+2s}}dydx.$$
Moreover, the space $(H_0^{s}(\Omega),\|\cdot\|_{H_0^{s}(\Omega)})$ is a reflexive separable Hilbert space. According to Proposition 3.6 of \cite{Valdinocci}, the norms $\|\cdot\|_{H_0^{s}(\Omega)}$ and $\|(-\Delta)^{s/2}\cdot\|_{L^2(\mathbb{R}^N)}$ are norm equivalent. We now state the well known fractional Sobolev embedding theorem, Theorem 6.5 of \cite{Valdinocci}, which will be used frequently throughout this article.
\begin{theorem}\label{constant}
	Let $s\in(0,1)$ and $1\leq p<\infty$ with $N>sp$. Then there exists $C=C(s,N,p)>0$ such that for every $u\in W_0^{s,p}(\Omega)$, 
	$$\|u\|_{L^q(\Omega)}\leq C \|u\|_{W_0^{s,p}(\Omega)} $$
	for all $1\leq q\leq p_s^*=\frac{Np}{N-sp}$. Further, the embedding from $W_0^{s,p}(\Omega)$ to $L^q(\Omega)$ is compact for every $q\in[1,p_s^*)$.
\end{theorem}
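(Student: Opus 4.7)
The plan is to establish the continuous embedding first at the critical exponent on the whole space, extend it to $W_0^{s,p}(\Omega)$ by zero extension, interpolate to reach every sub-critical exponent, and finally invoke a fractional Fr\'echet--Kolmogorov criterion for compactness. This is essentially the argument given in Di~Nezza--Palatucci--Valdinoci's Hitchhiker's guide.

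First I would prove the estimate
\[
\|u\|_{L^{p_s^\ast}(\mathbb{R}^N)}\leq C(s,N,p)\,[u]_{s,p},\qquad [u]_{s,p}^p:=\iint_{\mathbb{R}^{2N}}\frac{|u(x)-u(y)|^p}{|x-y|^{N+sp}}\,dx\,dy,
\]
for every $u\in C_c^\infty(\mathbb{R}^N)$. The cleanest route valid for every $p\in[1,\infty)$ is the dyadic level-set decomposition: writing $A_k=\{2^k<|u|\leq 2^{k+1}\}$ and $D_k=\{|u|>2^k\}$, one controls $|D_k|$ in terms of the contribution of the Gagliardo double integral over pairs $(x,y)$ in which $x\in A_{k-1}$ and $y$ lies in the annulus at distance $\asymp |D_k|^{1/N}$ from $x$. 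Summing the resulting geometric series over $k\in\mathbb{Z}$ yields the desired $L^{p_s^\ast}$ bound; this is carried out in Theorem 6.5 of the Hitchhiker's guide and I would follow it directly.

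Next, for $u\in W_0^{s,p}(\Omega)$ the zero extension $\tilde u$ lies in $W^{s,p}(\mathbb{R}^N)$ with $[\tilde u]_{s,p}=\|u\|_{W_0^{s,p}(\Omega)}$, because the contribution from pairs $(x,y)$ outside $\Omega$ vanishes and the cross terms are exactly those retained in the $W_0^{s,p}$-norm. Applying the previous step to $\tilde u$ gives the embedding into $L^{p_s^\ast}(\Omega)$. For $1\leq q<p_s^\ast$, H\"older's inequality combined with $|\Omega|<\infty$ provides $\|u\|_{L^q(\Omega)}\leq |\Omega|^{1/q-1/p_s^\ast}\|u\|_{L^{p_s^\ast}(\Omega)}$, completing the continuous embedding for the whole range $[1,p_s^\ast]$.

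For the compactness statement I would use the fractional analogue of the Fr\'echet--Kolmogorov theorem. Given a bounded sequence $\{u_n\}\subset W_0^{s,p}(\Omega)$, the translation estimate
\[
\int_{\mathbb{R}^N}|\tilde u_n(x+h)-\tilde u_n(x)|^p\,dx\leq C\,|h|^{sp}\,[\tilde u_n]_{s,p}^p
\]
(obtained from the Gagliardo seminorm via a standard change of variables) gives equi-continuity in $L^p$, while the zero extension outside $\Omega$ gives tightness. Hence $\{u_n\}$ is precompact in $L^p(\Omega)$ and a subsequence converges a.e.\ and in $L^p$. The continuous embedding into $L^{p_s^\ast}(\Omega)$ bounds the tails, so interpolating between $L^p$-convergence and the $L^{p_s^\ast}$-bound (or invoking Vitali's theorem) upgrades the convergence to $L^q$ for every $q\in[1,p_s^\ast)$. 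The main technical obstacle is the first step: the dyadic covering argument requires careful bookkeeping to obtain a constant depending only on $s,N,p$, and in particular the case $p=1$ is borderline and needs the slightly sharper rearrangement-type estimate from Chapter~6 of the Hitchhiker's guide, which I would cite directly rather than reprove.
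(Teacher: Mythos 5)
The paper does not prove this statement at all: it is quoted verbatim as Theorem 6.5 of the Hitchhiker's guide \cite{Valdinocci}, so there is no internal proof to compare against. Your outline is a correct reconstruction of the argument in that source --- the dyadic level-set proof of the critical embedding on $\mathbb{R}^N$, the observation that for the paper's definition of $W_0^{s,p}(\Omega)$ the zero extension already carries the full Gagliardo seminorm, H\"older's inequality on the bounded domain for subcritical $q$, and the translation estimate plus the Riesz--Kolmogorov criterion for compactness --- so it matches the cited proof in approach and is sound (your worry about $p=1$ being borderline is unnecessary, as the level-set argument there covers all $p\geq 1$ uniformly).
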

\noindent Define $\mathbb{S}_{s,p}$, the best Sobolev constant in the Sobolev embedding theorem, by 
\begin{equation}\label{best}
\mathbb{S}_{s,p}=\underset{u\in W_0^{s,p}(\Omega)\setminus\{0\}}{\inf}\frac{\|u\|_{W_0^{s,p}(\Omega)}^p}{\|u\|_{L^{p_s^*}(\Omega)}^p}.
\end{equation}
 Choose $b<\min\{N,4s\}$. Let us denote $2^*_b=\frac{2N-b}{N-2s}$ and for any $u\in L^{2_s^*}(\mathbb{R}^N)$, define  $$\|u\|_C=\left(\int_{\mathbb{R}^{N}}\int_{\mathbb{R}^{N}}\frac{u^{2_b^*}(x)u^{2_b^*}(y)}{|x-y|^b}dx dy\right)^{1/22_b^*}.$$
According to Lemma 2.2 of \cite{Wang}, $\|\cdot\|_C$ is a norm equivalent to the standard norm $\|\cdot\|_{L^{2_s^*}(\mathbb{R}^N)}$ on $L^{2_s^*}(\mathbb{R}^N)$. Thus, in this sense we can say that the problem $(P_\beta)$ is a critical Choquard type problem. To understand this sense of criticalness, we need to introduce the Hardy-Littlewood-Sobolev Inequality which is the foundation of the Choquard problem of type $(P_\beta)$.
\begin{proposition}[Proposition 2.1 of \cite{Giacomoni Hardy}]\label{HLS}
	Let $t,r>1$ and $0<b<N$ with $1/t+1/r+b/N=2$. Further, assume $f\in L^t(\mathbb{R}^N)$ and $g\in L^r(\mathbb{R}^N)$. Then there exists a sharp constant $C(t,r,b,N)>0$ such that
	$$\int_{\mathbb{R}^{N}}\int_{\mathbb{R}^{N}}\frac{f(x)g(y)}{|x-y|^b}dx dy\leq C(t,r,b,N)\|f\|_{L^t(\mathbb{R}^N)}\|g\|_{L^r(\mathbb{R}^N)}.$$
\end{proposition}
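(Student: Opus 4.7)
The plan is to reduce the bilinear inequality to a linear one about Riesz potentials and then establish the latter via Hedberg's pointwise bound. Set $\lambda = N - b \in (0,N)$ and define the Riesz potential
$$I_\lambda f(x) = \int_{\mathbb{R}^N} \frac{f(y)}{|x-y|^{N-\lambda}}\,dy.$$
By Fubini the claimed inequality is $|\langle I_\lambda f, g\rangle| \leq C(t,r,b,N)\|f\|_{L^t}\|g\|_{L^r}$. Using the relation $1/t+1/r+b/N=2$, one checks that $1/q := 1/t - \lambda/N$ satisfies $1/q + 1/r = 1$, so duality reduces the problem to proving the linear estimate
$$\|I_\lambda f\|_{L^q(\mathbb{R}^N)} \leq C\,\|f\|_{L^t(\mathbb{R}^N)}, \qquad 1 < t < N/\lambda.$$

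First I would establish Hedberg's pointwise inequality. Assuming $f \geq 0$, split $I_\lambda f(x)$ into an integral over $B_R(x)$ and its complement. The inner piece is dominated, via a dyadic decomposition of $B_R(x)$ into annuli $B_{R/2^k}\setminus B_{R/2^{k+1}}$, by $C R^\lambda Mf(x)$, where $M$ is the Hardy--Littlewood maximal operator. The outer piece is handled by H\"older's inequality, giving
$$\int_{|x-y|\geq R}\frac{f(y)}{|x-y|^{N-\lambda}}dy \leq \|f\|_{L^t}\Bigl(\int_R^\infty \rho^{(\lambda-N)t'+N-1}d\rho\Bigr)^{1/t'} \leq C R^{\lambda - N/t}\|f\|_{L^t}$$
provided $t < N/\lambda$. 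Choosing $R$ to equate the two bounds yields
$$I_\lambda f(x) \leq C\,(Mf(x))^{t/q}\,\|f\|_{L^t}^{1-t/q}.$$

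Raising this to the $q$-th power, integrating, and invoking the $L^t$-boundedness of $M$ (valid for $t>1$) produces the Riesz-potential estimate, from which the bilinear HLS inequality follows by duality. The main obstacle lies in the endpoint constraint $t,r>1$: the maximal-function argument fails at $t=1$, which is exactly why the hypothesis of the proposition excludes those endpoints. A secondary subtlety is the case $t \geq N/\lambda$, which is ruled out by the scaling relation $1/t+1/r+b/N=2$ combined with $r>1$, so the proof is internally consistent. Obtaining the sharp constant $C(t,r,b,N)$ would require symmetric-decreasing rearrangement (Lieb's argument), but since the statement only asks for existence of such a constant, the Hedberg--Marcinkiewicz approach above suffices.
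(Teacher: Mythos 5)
Your proposal is correct, but it cannot be compared against a proof in the paper because the paper offers none: Proposition \ref{HLS} is simply quoted from Proposition 2.1 of the cited reference of Giacomoni, Mukherjee and Sreenadh (ultimately Lieb--Loss), and is used as a black box. What you supply is the standard self-contained argument: reduction by duality to the Riesz-potential bound $\|I_\lambda f\|_{L^q}\leq C\|f\|_{L^t}$ with $\lambda=N-b$ and $1/q=1/t-\lambda/N$, and then Hedberg's pointwise inequality $I_\lambda f(x)\leq C\,(Mf(x))^{t/q}\|f\|_{L^t}^{1-t/q}$ obtained by splitting at radius $R$, summing dyadic annuli against the maximal function on the near part, applying H\"older on the far part, and optimizing in $R$. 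Your bookkeeping checks out: the relation $1/t+1/r+b/N=2$ gives exactly $1/q+1/r=1$, the convergence of the far-field integral is equivalent to $t<N/\lambda$, which in turn is equivalent to $r>1$, and the exponent $1-\lambda t/N$ equals $t/q$. You also correctly identify why both endpoints $t=1$ and $r=1$ must be excluded (failure of the maximal theorem on $L^1$, respectively $q=\infty$), and you are right that this route yields only the existence of a finite constant rather than Lieb's sharp one --- which is all the proposition, as used in this paper (e.g.\ in \eqref{sharp constant} and Lemma \ref{P-S}), actually requires. The only cosmetic caveat is that the duality step should be stated for $|f|,|g|$ with Tonelli justifying the interchange of integrals, but that is routine.
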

\noindent For the choice $f=g=|u|^{2_b^*}$, by using the above inequality we get
\begin{equation}\label{sharp constant}
\|u\|_C^{22^*_b}\leq C(N,b)\|u\|_{L^{2_s^*}(\mathbb{R}^N)}^{22_b^*}.
\end{equation}
\noindent Define 
\begin{equation}\label{best sobolev choquard}
\mathbb{S}_{C,b}=\underset{u\in H_0^{s}(\Omega)\setminus\{0\}}{\inf}\frac{\|u\|_{H_0^{s}(\Omega)}^2}{\|u\|_{C}^2}.
\end{equation}
\begin{lemma}[Lemma 2.5 of \cite{Giacomoni Hardy}]\label{best constant achieve}
	\noindent The constant $\mathbb{S}_{C,b}$ is achieved if and only if 
	$$u=C\left(\frac{k}{k^2+|x-x_0|^2}\right)^{\frac{N-2s}{2}}$$
	where $C>0$ is a fixed constant, $x_0\in \mathbb{R}^N$ and $k\in(0,\infty)$ are parameters. Moreover, 
	$$\mathbb{S}_{C,b}=\frac{\mathbb{S}_{s,2}}{C(N,b)^{\frac{N-2s}{2N-b}}}.$$
\end{lemma}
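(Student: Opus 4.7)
The plan is to prove both the value of $\mathbb{S}_{C,b}$ and the characterization of its minimizers by sandwiching the Rayleigh quotient between the Hardy--Littlewood--Sobolev inequality and the sharp fractional Sobolev inequality, exploiting the fortunate fact that the extremal families of these two inequalities coincide.

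For the lower bound I would start directly from \eqref{sharp constant}: applying Proposition~\ref{HLS} with $f = g = |u|^{2_b^*}$ and $t = r = \tfrac{2N}{2N-b}$ (so that $1/t + 1/r + b/N = 2$) gives $\|u\|_C^{22_b^*} \leq C(N,b)\,\|u\|_{L^{2_s^*}(\mathbb{R}^N)}^{22_b^*}$. Raising both sides to the power $1/2_b^*$ yields
\[
\|u\|_C^2 \;\leq\; C(N,b)^{(N-2s)/(2N-b)}\,\|u\|_{L^{2_s^*}(\mathbb{R}^N)}^2.
\]
Dividing $\|u\|_{H_0^s(\Omega)}^2$ by the two sides and taking the infimum over $u\in H_0^s(\Omega)\setminus\{0\}$ (and recalling \eqref{best} with $p=2$) yields $\mathbb{S}_{C,b} \geq \mathbb{S}_{s,2}/C(N,b)^{(N-2s)/(2N-b)}$, which is the claimed lower bound.

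For sharpness and the characterization of extremals I would combine two classification theorems. The first is Lieb's sharp form of HLS: equality in Proposition~\ref{HLS} with $f=g=|u|^{2_b^*}$ forces, up to translation and dilation, $|u|(x) = C\bigl(k/(k^2+|x-x_0|^2)\bigr)^{(N-2s)/2}$, since this is the exact profile of $(1+|x|^2)^{-(2N-b)/2}$ rescaled by $2_b^*$. The second is the classification of extremals for the sharp fractional Sobolev inequality (of Cotsiolis--Tavoularis and Frank--Lieb): the infimum \eqref{best} with $p=2$ is realized on $\mathbb{R}^N$ by exactly the same family. The essential point is that both classifications return identical bubble profiles, so plugging any such $u$ into the Rayleigh quotient saturates both inequalities simultaneously; this gives the explicit value of $\mathbb{S}_{C,b}$ and confirms the ``only if'' direction. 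Conversely, any minimizer must produce equality in each step above, so by the rigidity in both classifications it must belong to the stated family.

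The main obstacle is conceptual rather than computational: the argument depends crucially on invoking two deep classification results (extremals for HLS and for the fractional Sobolev embedding) and checking they are compatible. A secondary subtlety is that $\mathbb{S}_{C,b}$ is defined over $H_0^s(\Omega)$ whereas the extremal lives in $H^s(\mathbb{R}^N)$; as is standard for critical, translation-- and dilation--invariant quotients, the infimum is independent of the bounded domain (approximated by translated/rescaled bubbles that concentrate and almost vanish outside $\Omega$), which reconciles the two viewpoints and explains why attainment occurs only in the limiting sense on $\Omega$ itself.
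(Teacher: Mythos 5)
The paper offers no proof of this lemma --- it is imported verbatim from Lemma~2.5 of \cite{Giacomoni Hardy} --- and your argument correctly reconstructs the standard proof given there (and in Gao--Yang for the local case): the lower bound $\mathbb{S}_{C,b}\geq \mathbb{S}_{s,2}/C(N,b)^{(N-2s)/(2N-b)}$ from \eqref{sharp constant} combined with \eqref{best}, and sharpness plus the characterization of minimizers from the fact that Lieb's HLS extremals and the fractional Sobolev extremals are the same bubble family, so both inequalities saturate simultaneously. Your closing remark about the mismatch between the infimum over $H_0^s(\Omega)$ and the extremal living on $\mathbb{R}^N$ is also the right reading of the (slightly imprecise) statement: on a bounded domain the infimum coincides with the whole-space constant but is attained only in the concentrating limit.
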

\noindent Let us define the functional of the elliptic part of our problem $(P_\beta)$ as 
\begin{equation}\label{elliptic}
\mathbb{H}_{\alpha,\lambda}(u)=\frac{1}{2}\left(\|u\|^2_{H_0^s(\Omega)}-\alpha\|u\|_{NH}^2-\lambda\|u\|^2_{L^2(\Omega)}\right), ~\text{where}~\|u\|_{NH}^2=\int_{\mathbb{R}^N}\frac{|u(x)|^2}{|x|^{2s}}dx.
\end{equation}
Since the embedding $H_0^s(\Omega)\hookrightarrow L^2(\Omega,|x|^{-2s})$ is continuous but not compact, the Hardy term in the problem is also a critical part. To get rid of this critical term, we look for a range of $\alpha$ and $\lambda$ such that the functional $\mathbb{H}_{\alpha,\lambda}$ will become weakly lower semi continuous and coercive in $H_0^s(\Omega)$.\\
Let $\lambda_1$ be the first eigenvalue of the fractional Laplacian $(-\Delta)^s$ and hence $\lambda_1>0$. Thus, for every $\lambda<\lambda_1$ and every $u\in H_0^s(\Omega)$, we get the following inequality (refer $\cite{Fiscella}$).
\begin{equation}\label{eigen}
m_\lambda \|u\|_{H_0^s(\Omega)}^2\leq \int_{\Omega}|(-\Delta)^{s/2}u(x)|^2 dx-\lambda\int_\Omega|u(x)|^2 dx\leq M_\lambda \|u\|_{H_0^s(\Omega)}^2,
\end{equation}
where $m_\lambda=1-\frac{\lambda^-}{\lambda_1}$ and $M_\lambda=1+\frac{\lambda^+}{\lambda_1}$. The best fractional Hardy constant $C_H=C_H(N,s)>0$, defined below, plays an important role in $H_0^s(\Omega)$ given by,
\begin{equation}
C_H=\underset{u\in H_0^s(\Omega),u\neq0}{\inf}\frac{\|u\|_{H_0^s(\Omega)}^2}{\|u\|_{NH}^2}.
\end{equation}
\begin{corollary}[Corollary 2.3 of \cite{Fiscella}]\label{coercive}
	For any $\lambda\in (-\infty,\lambda_1)$ and $\alpha\in (-\infty, m_\lambda C_H)$, the functional $\mathbb{H}_{\alpha,\lambda}:H_0^s(\Omega)\rightarrow\mathbb{R}$, defined in $\eqref{elliptic}$,
	is weakly lower semi continuous and coercive in $H_0^s(\Omega)$. Furthermore, 
	$$\mathbb{H}_{\alpha,\lambda}(u)\geq \frac{1}{2}\left(m_\lambda-\frac{\alpha^+}{C_H}\right)\|u\|_{H_0^s(\Omega)}^2.$$	
\end{corollary}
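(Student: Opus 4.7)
The plan is to first establish the explicit lower bound on $\mathbb{H}_{\alpha,\lambda}$, from which coercivity will immediately follow, and then use a splitting of the functional into a ``Hardy-corrected'' quadratic form plus a lower-order $L^2$ term to obtain weak lower semi-continuity.

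For the coercivity bound, I would combine the eigenvalue inequality $\|u\|_{H_0^s(\Omega)}^2 - \lambda\|u\|_{L^2(\Omega)}^2 \geq m_\lambda \|u\|_{H_0^s(\Omega)}^2$ from $\eqref{eigen}$ with the Hardy inequality $\|u\|_{NH}^2 \leq \frac{1}{C_H}\|u\|_{H_0^s(\Omega)}^2$, applied only to the positive part of $\alpha$ (so that when $\alpha \leq 0$ the term $-\alpha\|u\|_{NH}^2$ is already non-negative, while when $\alpha > 0$ it is bounded via Hardy). This naturally produces the factor $m_\lambda - \alpha^+/C_H$, which is strictly positive under the hypothesis $\alpha < m_\lambda C_H$. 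Hence $\mathbb{H}_{\alpha,\lambda}(u) \to \infty$ whenever $\|u\|_{H_0^s(\Omega)} \to \infty$, yielding coercivity.

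For the weak lower semi-continuity, I would write $\mathbb{H}_{\alpha,\lambda}(u) = \frac{1}{2}Q_\alpha(u) - \frac{\lambda}{2}\|u\|_{L^2(\Omega)}^2$ where $Q_\alpha(u) := \|u\|_{H_0^s(\Omega)}^2 - \alpha\|u\|_{NH}^2$. The $L^2$ term is even weakly \emph{continuous} thanks to the compact embedding $H_0^s(\Omega) \hookrightarrow L^2(\Omega)$ granted by Theorem $\ref{constant}$: weak convergence $u_n \rightharpoonup u$ in $H_0^s(\Omega)$ forces $u_n \to u$ in $L^2(\Omega)$ and hence $\|u_n\|_{L^2(\Omega)}^2 \to \|u\|_{L^2(\Omega)}^2$. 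For $Q_\alpha$, I distinguish two cases. If $\alpha \leq 0$, then $Q_\alpha$ is a sum of two continuous, convex functionals on $H_0^s(\Omega)$ and is therefore weakly lower semi-continuous. If $0 < \alpha < m_\lambda C_H \leq C_H$, the Hardy inequality yields $Q_\alpha(u) \geq (1 - \alpha/C_H)\|u\|_{H_0^s(\Omega)}^2$, so $Q_\alpha^{1/2}$ defines a norm on $H_0^s(\Omega)$ equivalent to $\|\cdot\|_{H_0^s(\Omega)}$; since every norm on a Banach space is weakly lower semi-continuous with respect to its weak topology, $Q_\alpha$ inherits this property.

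The main obstacle is precisely the critical Hardy term $\|u\|_{NH}^2$: the embedding $H_0^s(\Omega) \hookrightarrow L^2(\Omega,|x|^{-2s})$ is continuous but \emph{not} compact, so weak convergence in $H_0^s(\Omega)$ alone does not yield convergence of $\|u_n\|_{NH}^2$, and the map $u \mapsto -\alpha\|u\|_{NH}^2$ is concave (not convex) when $\alpha > 0$. The idea that circumvents this is to fuse this term with $\|u\|_{H_0^s(\Omega)}^2$ under the constraint $\alpha < C_H$, so that the resulting quadratic form $Q_\alpha$ is itself the square of an equivalent norm, and hence automatically weakly lower semi-continuous. This argument relies crucially on the strict inequality in the hypothesis; at the threshold $\alpha = C_H$ a Hardy extremal sequence would drive $Q_\alpha$ toward zero and destroy both the norm-equivalence and the semi-continuity.
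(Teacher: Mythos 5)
Your argument is correct. Note that the paper itself gives no proof of this statement: it is imported verbatim as Corollary 2.3 of \cite{Fiscella}, so there is nothing to compare against except the source. Your proof is the standard one and is sound: the lower bound follows from the left inequality in \eqref{eigen} together with the fractional Hardy inequality applied only to $\alpha^+$, and coercivity is immediate since $\alpha< m_\lambda C_H$ forces $m_\lambda-\alpha^+/C_H>0$. Your treatment of weak lower semi-continuity is the right way around the non-compactness of $H_0^s(\Omega)\hookrightarrow L^2(\Omega,|x|^{-2s})$: for $\alpha\le 0$ the form $Q_\alpha$ is convex and strongly continuous, hence weakly l.s.c., while for $0<\alpha<C_H$ it is a nonnegative continuous quadratic form (equivalently, the square of an equivalent norm), hence again convex and weakly l.s.c.; combined with the weak continuity of the $L^2$ term via the compact embedding, this gives the claim.
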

\noindent The following theorem is a commonly used variational principle, known as `Ekeland Variational Principle', to prove the existence of solution to variational problems.
\begin{theorem}\label{eke}
	(Ekeland Variational Principle $\cite{Ekeland}$) Assume $H$ to be a Banach space and the function $J:H\rightarrow\mathbb{R}\cup\{+\infty\}$ is  G$\hat{a}$teaux-differentiable, lower semi continuous and bounded from below. Then for every $\epsilon>0$ and for every $u\in H$ satisfying $ J(u)\leq\inf J+\epsilon$, every $\delta>0$, there exists $v\in H$ such that $J(v)\leq J(u)$, $\|u-v\|\leq\delta$ and $\|J^\prime(v)\|^*\leq\frac{\epsilon}{\delta}$. The norms $\|.\|$ and $\|.\|^*$ are the norm of $V$ and the dual norm of $V^*$, respectively.
\end{theorem}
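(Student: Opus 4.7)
The plan is to produce a point $v$ that is a strict ``quasi-minimum'' in the sense that $J(w) > J(v) - \tfrac{\epsilon}{\delta}\|w-v\|$ for every $w \neq v$; from such a $v$ the norm bound on $J'(v)$ follows immediately by taking $w = v + th$ and letting $t \downarrow 0$. To construct $v$, I would introduce the relation $w \preceq z$ on $H$ defined by $J(w) + \tfrac{\epsilon}{\delta}\|w - z\| \leq J(z)$. Reflexivity and antisymmetry are immediate, and transitivity is a one-line computation using the triangle inequality on the norm. The theorem then reduces to locating a $\preceq$-minimal element $v$ with $v \preceq u$.

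The construction is by induction. Set $u_0 = u$, and having chosen $u_n$, let $S_n := \{w \in H : w \preceq u_n\}$, which contains $u_n$ and is therefore nonempty, and pick $u_{n+1} \in S_n$ with $J(u_{n+1}) \leq \inf_{S_n} J + 2^{-(n+1)}$. From $u_{n+1} \preceq u_n$ one reads off $\tfrac{\epsilon}{\delta}\|u_{n+1} - u_n\| \leq J(u_n) - J(u_{n+1})$, and since $\{J(u_n)\}$ is decreasing and bounded below (because $J$ is bounded below), telescoping yields $\sum_n \|u_{n+1}-u_n\| < \infty$, so $\{u_n\}$ is Cauchy and converges to some $v \in H$. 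Using lower semicontinuity of $J$ and continuity of the norm, I would pass to the limit in $w \preceq u_n$ (with $w$ any fixed $u_k$, $k\geq n$, and then $k\to\infty$) to deduce $v \preceq u_n$ for all $n$. In particular $v \preceq u$, which gives $J(v) \leq J(u)$ and $\tfrac{\epsilon}{\delta}\|v-u\| \leq J(u) - J(v) \leq J(u) - \inf J \leq \epsilon$, so $\|v-u\|\leq \delta$. Minimality then follows: if $w \preceq v$ then $w \preceq u_n$ by transitivity, hence $w \in S_n$ and $J(w) \geq J(u_{n+1}) - 2^{-(n+1)} \to J(v)$; combined with $J(w) + \tfrac{\epsilon}{\delta}\|w - v\| \leq J(v)$ this forces $w = v$. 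Thus $J(w) > J(v) - \tfrac{\epsilon}{\delta}\|w-v\|$ for every $w \neq v$, and plugging $w = v + th$ and sending $t \downarrow 0$ produces $\langle J'(v), h \rangle \geq -\tfrac{\epsilon}{\delta}\|h\|$; applying this with $-h$ in place of $h$ yields $\|J'(v)\|^* \leq \tfrac{\epsilon}{\delta}$.

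The subtlety I expect to be the main obstacle is the limiting step in which one propagates $v \preceq u_n$ from the approximating sequence. The relation involves the value $J(v)$, which a priori could jump up at the limit, so lower semicontinuity of $J$ (combined with the fact that $J(u_n)$ is a monotone decreasing sequence of finite numbers) is essential to ensure $v$ sits in the effective domain of $J$ and inherits the full chain of inequalities. Once this limiting argument is secured, the remainder of the proof — Cauchy-ness via telescoping, minimality via transitivity, and the differential bound via a difference quotient — is routine.
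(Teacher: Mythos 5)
The paper does not prove this statement at all: it is quoted as a classical result with a citation to Ekeland's 1974 article, and is used later only as a black box to extract a Palais--Smale sequence for $J_{\beta,n}$. Your argument is therefore not ``a different route from the paper'' so much as a proof where the paper offers none; it is the standard Ekeland construction (essentially the one in the cited reference) and it is correct. The order $w \preceq z \iff J(w) + \tfrac{\epsilon}{\delta}\|w-z\| \leq J(z)$, the inductive near-minimization over the sections $S_n$, the telescoping bound $\tfrac{\epsilon}{\delta}\sum_n\|u_{n+1}-u_n\| \leq J(u) - \inf J$, the passage to the limit via lower semicontinuity, and the difference-quotient step using G\^ateaux differentiability in both directions $\pm h$ are all in order. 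Two cosmetic points: in the minimality step the quantity $J(u_{n+1}) - 2^{-(n+1)}$ converges to $L := \lim_n J(u_n)$, which satisfies $L \geq J(v)$ by lower semicontinuity --- that inequality, not equality with $J(v)$, is what you actually use and it suffices; and you should note that antisymmetry of $\preceq$ is only needed (and only holds) on the effective domain of $J$, which is harmless since $J(u) \leq \inf J + \epsilon < \infty$ forces every $u_n$ and $v$ to lie there.
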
	 
\noindent Since our problem $(P_\beta)$ involves a measure data, we do not expect the solution space to be $H_0^s(\Omega)$ but expect the solution to lie in a space with a lower degree of integrability or/and differentiability. Thus, we look for a SOLA (Solutions Obtained as Limits of Approximations). We now define the notion of solution to problem ($P_\beta$).
\begin{definition}\label{SOLA}
Let $\mathcal{M}(\Omega)$ be the set of all finite Radon measures on $\Omega$ and $\mu\in \mathcal{M}(\Omega)$. Then a function $u\in W_0^{\bar{s},m}(\Omega)$, for $\bar{s}<s$ and $m<\frac{N}{N-s}$, is said to be a SOLA to $(P_\beta)$  if
	\begin{equation}\label{SOLA weak}
	\int_{\mathbb{R}^{N}}(-\Delta)^{s/2}u\cdot(-\Delta)^{s/2}\phi-\alpha\int_\Omega\frac{u\phi}{|x|^{2s}}=\lambda\int_{\Omega}u\phi+\int_{\Omega}\frac{\phi}{u^\gamma}+\beta\int_{\Omega}\int_{\Omega} \frac{u^{2_b^*}(y)u^{2_b^*-1}(x)\phi(x)}{|x-y|^b}dxdy+\int_{\Omega} \phi d\mu, 
	\end{equation}
	for every $\phi\in C_c^\infty(\Omega)$. Further, for any $\omega\subset \subset\Omega$, there exists a $C_\omega$ such that 
	\begin{equation}\label{comapct}
	u\geq C_\omega>0.
	\end{equation}
\end{definition}
\noindent Consider a sequence $(\mu_n)\subset L^\infty(\Omega)$ which is $L^1$ bounded and converges to $\mu$ in the sense of measure as defined in the following definition.
\begin{definition}\label{measure}
Assume	$(\mu_n)\subset\mathcal{M}(\Omega)$ to be a sequence of measurable functions. Then $(\mu_n)$ converges to $\mu\in \mathcal{M}(\Omega)$ in the sense of measure if $$\int_{\Omega}\varphi\mu_n \rightarrow\int_\Omega \varphi d\mu,~~\forall \varphi \in C_0(\bar{\Omega}).$$  
\end{definition}
\noindent We now construct the following sequence of problems which is the approximating problem to ($P_\beta$). 
\begin{equation}\label{4p2}\tag{$P_{\beta,n}$}
\begin{split}
(-\Delta)^su_n-\alpha \frac{u_n}{|x|^{2s}}&=\lambda u_n+ \frac{1}{(u_n+\frac{1}{n})^\gamma}+ \beta \left(\int_{\Omega}\frac{u_n^{2_b^*}(y)}{|x-y|^b}dy\right)u_n^{2_b^*-1}+\mu_n ~\text{in}~\Omega,\\
u_n&>0~\text{in}~\Omega,\\
u_n&= 0~\text{in}~\mathbb{R}^N\setminus\Omega,
\end{split}
\end{equation}
\begin{definition}\label{weak form of apprpx}
Let $\mu_n\in L^\infty(\Omega)$ and $\gamma\in(0,1)$. Then  $u_n\in H_0^s(\Omega)$ is said to be a weak solution to ($P_{\beta,n}$) if
	\begin{align}\label{approximating weak}
	\int_{\mathbb{R}^{N}}(-\Delta)^{s/2}u_n&\cdot(-\Delta)^{s/2}\phi-\alpha\int_\Omega\frac{u_n\phi}{|x|^{2s}}\nonumber\\&=\lambda\int_{\Omega}u_n\phi+\int_{\Omega}\frac{\phi}{(u_n+1/n)^\gamma}+\beta\int_{\Omega}\int_{\Omega} \frac{u_n^{2_b^*}(y)u_n^{2_b^*-1}(x)\phi(x)}{|x-y|^b}dxdy+\int_{\Omega}\mu_n \phi	
	\end{align}
	for every $\phi\in C_c^\infty(\Omega)$. Further, for any $\omega\subset \subset\Omega$ there exists $C_\omega$ such that $u_n\geq C_\omega>0$.
\end{definition} 
\noindent Denote 
$$H=\{u\in H_0^s(\Omega):\|u\|_{L^{2_s^*}(\Omega)}=1\}.$$
We now state the following existence theorem. We prove this in the next section.
\begin{theorem}\label{approx solution}
Let $\gamma\in (0,1)$, $b<\min\{4s,N\}$, $\lambda\in (0,\lambda_1)$ and $\alpha\in (0, m_\lambda C_H)$ such that $\left(\frac{N-b+2s}{2N-b}\right)^{\frac{N-b+2s}{2N-b}}\leq m_\lambda-\frac{\alpha}{C_H}$. Then there exists $\bar{\beta}\in (0,\infty)$ such that for every $\beta\in (0,\bar{\beta})$, $(P_{\beta,n})$ possesses a positive weak solution $u_n$ in $H$.
\end{theorem}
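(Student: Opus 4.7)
The plan is variational: I view $(P_{\beta,n})$ as the Euler--Lagrange equation of a $C^1$ energy functional on $H_0^s(\Omega)$ and produce a non-trivial nonnegative critical point by an Ekeland/mountain-pass scheme combined with a concentration--compactness analysis, keeping $\beta$ small enough to avoid the compactness loss caused by the critical Choquard nonlinearity.

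\textbf{Step 1 (Regularised energy).} Introduce
$$J_{\beta,n}(u)=\mathbb{H}_{\alpha,\lambda}(u)-\frac{1}{1-\gamma}\int_{\Omega}\!\big[(u^{+}+\tfrac1n)^{1-\gamma}-(\tfrac1n)^{1-\gamma}\big]dx-\frac{\beta}{2\cdot 2_b^{*}}\iint_{\Omega\times\Omega}\frac{(u^{+})^{2_b^{*}}(y)(u^{+})^{2_b^{*}}(x)}{|x-y|^{b}}\,dx\,dy-\int_{\Omega}\mu_n u\,dx.$$
Because $\mu_n\in L^{\infty}(\Omega)$ and $\gamma\in(0,1)$, $J_{\beta,n}\in C^{1}(H_0^{s}(\Omega);\mathbb{R})$ and its nonnegative critical points solve $(P_{\beta,n})$ in the sense of Definition \ref{weak form of apprpx}.

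\textbf{Step 2 (Variational geometry).} Corollary \ref{coercive}, the Hardy--Littlewood--Sobolev inequality combined with \eqref{sharp constant}, and the Sobolev embedding (Theorem \ref{constant}) give
$$J_{\beta,n}(u)\ge\tfrac12\Big(m_{\lambda}-\tfrac{\alpha}{C_{H}}\Big)\|u\|_{H_0^{s}(\Omega)}^{2}-c_{1}\|u\|_{H_0^{s}(\Omega)}^{1-\gamma}-c_{2}\beta\|u\|_{H_0^{s}(\Omega)}^{2\cdot 2_b^{*}}-c_{3}\|\mu_n\|_{L^{\infty}}\|u\|_{H_0^{s}(\Omega)}.$$
Together with $J_{\beta,n}(t\varphi)\to-\infty$ as $t\to+\infty$ for a fixed $0\le\varphi\in C_{c}^{\infty}(\Omega)$ (since $2\cdot 2_b^{*}>2$), this yields the usual mountain-pass/local-minimum geometry; a minimax value $c_n$ is well defined on the $L^{2_{s}^{*}}$-normalised subset $H$.

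\textbf{Step 3 (Palais--Smale threshold).} Establish a nonlocal Brezis--Lieb-type decomposition for the Choquard term (as in \cite{Gao}) and combine it with the coercivity from Corollary \ref{coercive} to prove that every $(\mathrm{PS})_{c}$-sequence of $J_{\beta,n}$ with
$$c<c_{\beta}^{\star}:=\frac{N-b+2s}{2(2N-b)}\left(\frac{\mathbb{S}_{C,b}}{\beta^{(N-2s)/(N-b+2s)}}\right)^{\!\!\frac{2N-b}{N-b+2s}}$$
is relatively compact in $H_0^{s}(\Omega)$. The structural hypothesis $\bigl(\tfrac{N-b+2s}{2N-b}\bigr)^{(N-b+2s)/(2N-b)}\le m_{\lambda}-\alpha/C_{H}$ is exactly what lets the elliptic coercivity absorb the concentrations produced simultaneously by the Hardy and the Choquard critical parts.

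\textbf{Step 4 (Existence, normalisation and positivity).} Testing $J_{\beta,n}$ along a truncated extremal of $\mathbb{S}_{C,b}$ (Lemma \ref{best constant achieve}), estimate the candidate critical level and choose $\bar\beta\in(0,\infty)$ so that $c_n<c_{\beta}^{\star}$ for every $\beta\in(0,\bar\beta)$. Ekeland's principle (Theorem \ref{eke}) delivers a PS-sequence at this level which, by Step 3, converges strongly in $H_0^{s}(\Omega)$ to a nonzero critical point; a rescaling places this limit in $H$. Testing the equation against $u_n^{-}$ gives $u_n\ge 0$, the strong maximum principle for $(-\Delta)^{s}$ upgrades this to $u_n>0$ in $\Omega$, and standard fractional Harnack/barrier arguments yield the local positive bound $u_n\ge C_{\omega}$ on each $\omega\subset\subset\Omega$.

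The hardest point will be Step 3: extracting the correct Palais--Smale threshold when the Hardy potential $\alpha u/|x|^{2s}$, the critical Choquard nonlinearity and the $L^{\infty}$-data $\mu_n$ all coexist, and then using the structural hypothesis tying $m_{\lambda}-\alpha/C_{H}$ to $(\tfrac{N-b+2s}{2N-b})^{(N-b+2s)/(2N-b)}$ to decouple the Hardy and Choquard losses of compactness in the concentration--compactness analysis.
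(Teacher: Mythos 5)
Your high-level machinery (Ekeland's principle, a Palais--Smale threshold extracted from a Brezis--Lieb decomposition of the Choquard term, energy estimates along truncated extremals of $\mathbb{S}_{C,b}$, and smallness of $\beta$) is the same as the paper's, but your architecture is different: you put all four right-hand-side terms into one free functional on $H_0^s(\Omega)$, whereas the paper first solves the auxiliary problem $(P^1_{\beta,n})$ (Hardy term, singular term and $\mu_n$ only) by direct minimization of a coercive, weakly lower semicontinuous functional, obtaining $w_n$ with $w_n\ge \underline{w}_n\ge C_\omega>0$ by comparison with the purely singular problem of \cite{Ghosh}, and then solves the translated problem $(P^2_{\beta,n})$ for $v_n$ with the shifted nonlinearity $f_n(x,s)$, setting $u_n=v_n+w_n$. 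The translation is not cosmetic: it removes the measure from the second-stage functional and turns the singular contribution into a nonnegative term $F_n\ge 0$ that can simply be dropped in the lower bounds of the Palais--Smale analysis, and it delivers the interior bound $u_n\ge C_\omega$ required by Definition~\ref{weak form of apprpx} without invoking a Harnack inequality for an operator carrying a Hardy potential. In your single-functional version the terms $-\int_\Omega\mu_n u$ and $-c_1\|u\|_{H_0^s(\Omega)}^{1-\gamma}$ make the origin a strict local maximum along rays, so the geometry is a negative-energy local minimum rather than a clean mountain pass; that is repairable, but you should say which critical level you are comparing to the threshold.

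The genuine gap is the normalisation. The theorem asserts $u_n\in H$, i.e.\ $\|u_n\|_{L^{2_s^*}(\Omega)}=1$, and you propose to obtain a critical point of the free functional and then place it in $H$ ``by a rescaling''. This cannot work: $(P_{\beta,n})$ is inhomogeneous (the singular term, the linear term, the measure and the Choquard term all scale with different powers), so $tu_n$ is not a solution for $t\ne 1$ and rescaling destroys the equation. The constraint is not decorative either --- it is exactly what Lemma~\ref{lemma1} uses to bound the Hardy, Choquard and singular integrals uniformly in $n$ and to obtain the $W_0^{\bar s,m}(\Omega)$ estimate. The paper builds the normalisation in from the start by working on $\bar H=\{u:\|u\|_{L^{2_s^*}(\Omega)}<1\}$ and $H_n=\{u\in\bar H:\|u+w_n\|_{L^{2_s^*}(\Omega)}=1\}$. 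To close your argument you must either run the minimax on the constrained set and justify that the constrained critical point solves the unconstrained equation (i.e.\ handle the Lagrange multiplier), or replace the normalisation by an independent uniform $L^{2_s^*}$ bound on $u_n$. As a smaller point, your threshold $c_\beta^\star$ has the power of $\beta$ inside the outer exponent; it should read $\frac{1}{2}\left(\frac{N-b+2s}{2N-b}\right)\mathbb{S}_{C,b}^{\frac{2N-b}{N-b+2s}}\beta^{-\frac{N-2s}{N-b+2s}}$ minus the correction $\frac{C(N,b)\beta}{2\cdot 2_b^*}$ that the paper's constrained setting forces.
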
 
\noindent We are now ready to state the main result of the paper.
\begin{theorem}\label{main theorem}
	Let the assumptions on $\gamma, b, \alpha,\lambda$ are same as in Theorem $\ref{approx solution}$. Then there exists $0<\bar{\beta}<\infty$ such that for $\beta\in(0,\bar{\beta})$ the problem $(P_\beta)$ has a positive SOLA $u\in W_{0}^{\bar{s},m}(\Omega)$, in the sense of  Definition $\ref{SOLA}$, for every $\bar{s}<s$ and $m<\frac{N}{N-s}$.	
\end{theorem}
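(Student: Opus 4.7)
The plan is to construct a SOLA of $(P_\beta)$ as the almost-everywhere limit of the approximating solutions $u_n \in H$ provided by Theorem \ref{approx solution}, via uniform a priori estimates in a low-regularity Sobolev space combined with a concentration-compactness type argument for the critical Choquard term. The sequence $(\mu_n) \subset L^\infty(\Omega)$ with uniform $L^1$ bound converges to $\mu$ in the sense of Definition \ref{measure}, so the only real work is to pass to the limit in \eqref{approximating weak}.

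The first step is to derive uniform a priori estimates for $(u_n)$. Since $\mu_n$ is merely $L^1$-bounded, one cannot expect an $H_0^s$ bound; instead I would follow the fractional Boccardo--Gallouet scheme of \cite{Kuusi}, as adapted to singular problems in \cite{Panda 1} and \cite{Ghosh}. Testing \eqref{approximating weak} with the truncation $T_k(u_n)$, using the positivity of the singular and Choquard terms on the right-hand side together with the coercivity of $\mathbb{H}_{\alpha,\lambda}$ from Corollary \ref{coercive}, I expect an estimate of the form
\[
\|T_k(u_n)\|_{H_0^s(\Omega)}^2 \leq C(k+1),
\]
with $C$ independent of $n$. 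Standard Marcinkiewicz space manipulations then yield uniform weak-$L^p$ bounds on $u_n$ and on $(-\Delta)^{s/2}u_n$, which via the finite-difference characterization of fractional Sobolev spaces give a uniform bound of $(u_n)$ in $W_0^{\bar s,m}(\Omega)$ for every $\bar s < s$ and every $m < N/(N-s)$. After extraction, $u_n \rightharpoonup u$ weakly in $W_0^{\bar s,m}(\Omega)$, strongly in $L^q(\Omega)$ for $q < Nm/(N-\bar s m)$, and pointwise a.e.\ in $\Omega$.

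Next, I would verify the uniform lower bound \eqref{comapct}. The inequality $(-\Delta)^s u_n - \alpha u_n/|x|^{2s} \geq (u_n+1/n)^{-\gamma}$ together with a comparison argument against the positive solution of the linear problem $(-\Delta)^s w - \alpha w/|x|^{2s} = c \chi_{\omega_0}$ (for an enlargement $\omega_0$ of $\omega$ and $c$ small), in the spirit of the very weak comparison principle of \cite{Giacomoni arxiv}, yields $u_n \geq C_\omega > 0$ on each $\omega \subset\subset \Omega$, independent of $n$. Taking limits, $u \geq C_\omega$, so the singular term $\phi/(u_n+1/n)^\gamma$ is dominated uniformly on $\mathrm{supp}\,\phi$ by $C_\omega^{-\gamma}\|\phi\|_\infty$ and Vitali's theorem gives its convergence to $\phi/u^\gamma$. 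The linear and Hardy terms pass by dominated convergence on compact subsets (using $\phi \in C_c^\infty(\Omega)$ and the uniform bounds), and $\int \phi \,d\mu_n \to \int \phi \,d\mu$ by Definition \ref{measure}.

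The main obstacle is passing to the limit in the critical Choquard term, since the embedding $H_0^s(\Omega) \hookrightarrow L^{2_s^*}(\Omega)$ is not compact and concentration of $u_n^{2_b^*}$ at points of $\bar\Omega$ could in principle prevent convergence. I plan to handle this via a nonlocal Brezis--Lieb decomposition for the double convolution combined with the constraint $u_n \in H$ from Theorem \ref{approx solution} (so $\|u_n\|_{L^{2_s^*}} = 1$) and with the smallness of $\beta < \bar\beta$, arguing as in \cite{Giacomoni arxiv, Gao}: the threshold $\bar\beta$ has been selected so that the energy of $u_n$ remains strictly below the first level at which concentration in the Choquard functional becomes admissible, which via a Lions-type concentration analysis (using the best constant $\mathbb{S}_{C,b}$ characterized in Lemma \ref{best constant achieve}) excludes both interior and boundary concentration. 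Consequently $u_n^{2_b^*} \to u^{2_b^*}$ strongly in $L^{2N/(2N-b)}(\Omega)$, and the Hardy--Littlewood--Sobolev inequality (Proposition \ref{HLS}) lets me pass to the limit in the full Choquard term, producing $u \in W_0^{\bar s,m}(\Omega)$ satisfying \eqref{SOLA weak} and \eqref{comapct} and thus the desired SOLA in the sense of Definition \ref{SOLA}.
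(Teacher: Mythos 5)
Your overall architecture coincides with the paper's: obtain $u_n\in H$ from Theorem \ref{approx solution}, derive a uniform bound in $W_0^{\bar{s},m}(\Omega)$ by testing \eqref{approximating weak} with the truncations $T_k(u_n)$ in the Boccardo--Gallou\"{e}t/Kuusi--Mingione--Sire style (this is exactly Lemma \ref{lemma1}), extract an a.e.\ convergent subsequence, and pass to the limit term by term against a fixed $\phi\in C_c^\infty(\Omega)$. One correction to your estimate step: the positivity of the singular and Choquard terms does not help you, since they appear on the right-hand side as sources and must be \emph{controlled}, not discarded; what closes the estimate is the normalization $\|u_n\|_{L^{2_s^*}(\Omega)}=1$ built into $H$, which bounds $\int_\Omega u_n^{1-\gamma}$, $\int_\Omega u_n T_k(u_n)$, $\int_\Omega u_n|x|^{-2s}$ (by H\"{o}lder) and, through Proposition \ref{HLS}, the Choquard contribution by $\beta C(N,b)$. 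Your treatment of the measure, Hardy, linear and singular terms, and of the local lower bound $u_n\ge C_\omega$, agrees in substance with the paper (the paper already has the lower bound from Lemma \ref{weak first}, since $u_n=v_n+w_n\ge w_n\ge \underline{w}_n\ge C_\omega$ with $C_\omega$ independent of $n$).

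The genuine gap is in your handling of the critical Choquard term. You propose a Lions-type concentration analysis at the level of $\mathbb{S}_{C,b}$ to exclude concentration and conclude that $u_n^{2_b^*}\to u^{2_b^*}$ strongly in $L^{2N/(2N-b)}(\Omega)$. But $\mathbb{S}_{C,b}$ compares $\|\cdot\|_{H_0^s(\Omega)}$ with $\|\cdot\|_C$, and, as you yourself note at the outset, $(u_n)$ is \emph{not} bounded in $H_0^s(\Omega)$ --- only in $W_0^{\bar{s},m}(\Omega)$ with $m<\frac{N}{N-s}<2$ --- so the concentration measures $|(-\Delta)^{s/2}u_n|^2\,dx$ are not uniformly bounded and the standard machinery does not launch. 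Moreover, the energy-threshold argument of Lemmas \ref{P-S} and \ref{upper bound} governs the inner Palais--Smale sequence $(v_{n,j})_j$ for each fixed $n$, not the diagonal sequence $(u_n)_n$, whose members solve different problems; and strong $L^{2N/(2N-b)}$ convergence of $u_n^{2_b^*}$ would force $\|u\|_{L^{2_s^*}(\Omega)}=1$, which nothing guarantees and which the notion of SOLA does not require. None of this is needed: since $\phi$ is fixed, one only passes to the limit in a single integral, not in a norm. Because $2_b^*-1<2_b^*$, the functions $x\mapsto\int_\Omega u_n^{2_b^*}(y)u_n^{2_b^*-1}(x)\phi(x)|x-y|^{-b}\,dy$ are equi-integrable on $\Omega$ uniformly in $n$ (combine Proposition \ref{HLS} with the fact that $u_n^{2_b^*-1}\phi$ is bounded in $L^{2N/(N-b+2s)}(\Omega)$, an exponent strictly larger than the $L^{2N/(2N-b)}$ demanded by the inequality), and they converge a.e.; Vitali's theorem then yields the limit. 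This is what the paper does, phrased there as an application of the dominated convergence theorem together with $\|u_n\|_{L^{2_s^*}(\Omega)}=1$, and it is both simpler and actually available at the regularity level you have.
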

\section{Existence of weak solution to $(P_{\beta,n})$- Proof of Theorem $\ref{approx solution}$}\label{sec 2}
\noindent We prove the existence of weak solution to $(P_{\beta,n})$ via two sequence of problems given below, $(P^1_{\beta,n})$ and $(P^2_{\beta,n})$. Let us first consider the following problem
\begin{equation}\label{4p3}\tag{$P^1_{\beta,n}$}
\begin{split}
(-\Delta)^sw_n-\alpha \frac{w_n}{|x|^{2s}}&=\lambda w_n+ \frac{1}{(w_n+\frac{1}{n})^\gamma}+\mu_n ~\text{in}~\Omega,\\
w_n&>0~\text{in}~\Omega,\\
w_n&= 0~\text{in}~\mathbb{R}^N\setminus\Omega.
\end{split}
\end{equation}
We now prove the problem $(P^1_{\beta,n})$ has a weak solution in $\bar{H}=\{u\in H_0^s(\Omega):\|u\|_{L^{2_s^*}(\Omega)}<1\}$ in the following lemma. 
\begin{lemma}\label{weak first}
	Let $\gamma\in (0,1)$, $\lambda\in (0,\lambda_1)$ and $\alpha\in (0, m_\lambda C_H)$. Then the problem $(P_{\beta,n}^1)$ admits a positive weak solution $w_n$ in $\bar{H}$.
	\begin{proof}
		Consider the Euler -Lagrange functional $J_{n}$ associated to problem $(P_{\beta,n}^1)$, i.e.
		\begin{align*}
		J_n(w_n)&=\frac{1}{2}\left(\|w_n\|^2_{H_0^s(\Omega)}-\alpha\|w_n\|_{NH}^2-\lambda\|w_n\|^2_{L^2(\Omega)}\right)-\frac{1}{1-\gamma}\int_{\Omega}\left((w_n+1/n)^{1-\gamma}-\frac{1}{n^{1-\gamma}}\right)-\int_{\Omega}\mu_n w_n\nonumber\\&=\mathbb{H}_{\alpha,\lambda}(w_n)-\frac{1}{1-\gamma}\int_{\Omega}\left((w_n+1/n)^{1-\gamma}-\frac{1}{n^{1-\gamma}}\right)-\int_{\Omega}\mu_n w_n
		\end{align*}
		for any $w_n\in \bar{H}$. By Corollary $\ref{coercive}$, $\mathbb{H}_{\alpha,\lambda}$ is coercive and weakly lower semi continuous in $H_0^s(\Omega)$. Since $\gamma\in (0,1)$, it is easy to prove that $J_n$ is coercive and weakly lower semi continuous in $\bar{H}$. Thus, by using a standard minimization argument we show the existence of a minimum $w_n$ for $J_n$ on $\bar{H}$ and hence a weak solution to  $(P_{\beta,n}^1)$ in $\bar{H}$.\\
		Fix $\bar{w}_n\in \bar{H}\setminus\{0\}$. Then for sufficiently small $t>0$,
		\begin{align*}
		J_n(t\bar{w}_n)&=t^2\mathbb{H}_{\alpha,\lambda}(\bar{w}_n)-\frac{1}{1-\gamma}\int_{\Omega}\left((t\bar{w}_n+1/n)^{1-\gamma}-\frac{1}{n^{1-\gamma}}\right)-t\int_{\Omega}\mu_n \bar{w}_n\\&<0.
		\end{align*} 
		This implies 
		$$J_n(w_n)=\underset{\bar{w}_n\in \bar{H}}{\min}J_n(\bar{w}_n)<0=J_n(0)$$
		and hence $w_n$ is nontrivial. Let us consider the following problem 
		\begin{equation}\label{helping}
		\begin{split}
		(-\Delta)^s \underline{w}_n&= \frac{1}{(\underline{w}_n+\frac{1}{n})^\gamma}+\mu_n ~\text{in}~\Omega,\\
		\underline{w}_n&>0~\text{in}~\Omega,\\
		\underline{w}_n&= 0~\text{in}~\mathbb{R}^N\setminus\Omega.
		\end{split}
		\end{equation}
		According to Lemma 2.3 and Lemma 2.4 of Ghosh et.al \cite{Ghosh}, $\eqref{helping}$ admits a nontrivial positive weak solution in $H_0^s(\Omega)$ and for any $\omega\subset \subset\Omega$, there exists $C_\omega$ such that $\underline{w}_n\geq C_\omega>0$. Using a standard comparison principle, Lemma 2.4 of \cite{Abdellaoui}, we conclude that $\underline{w}_n\leq w_n$. Thus, $w_n\geq \underline{w}_n\geq C_\omega>0$. This finishes the proof.
	\end{proof}
\end{lemma}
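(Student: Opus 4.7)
The plan is to apply the direct method of the calculus of variations to the Euler--Lagrange functional
\[
J_n(w) = \mathbb{H}_{\alpha,\lambda}(w) - \frac{1}{1-\gamma}\int_\Omega\Bigl[(w + \tfrac{1}{n})^{1-\gamma} - \tfrac{1}{n^{1-\gamma}}\Bigr]\,dx - \int_\Omega \mu_n w\,dx
\]
associated with $(P^1_{\beta,n})$, and then to promote the resulting minimizer to a genuinely \emph{positive} solution via a comparison argument with a stripped-down auxiliary problem.

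First I would verify that $J_n$ is coercive and weakly lower semi-continuous on $H_0^s(\Omega)$. Coercivity is immediate from Corollary \ref{coercive}, which under the hypothesis $\alpha<m_\lambda C_H$ delivers the quadratic lower bound $\mathbb{H}_{\alpha,\lambda}(w)\ge \tfrac12(m_\lambda-\alpha/C_H)\|w\|_{H_0^s(\Omega)}^2$, combined with the elementary facts that the singular term grows sub-quadratically (since $0<1-\gamma<1$) and that $\mu_n\in L^\infty(\Omega)$ contributes only a linear term in $w$. For weak lower semi-continuity, Corollary \ref{coercive} handles the elliptic part, the compact embedding $H_0^s(\Omega)\hookrightarrow L^q(\Omega)$ for $q<2_s^*$ from Theorem \ref{constant} together with dominated convergence handles the concave singular integral, and the $\mu_n$ term is weakly continuous.

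The direct method then produces a minimizer $w_n$ of $J_n$. To show $w_n\not\equiv 0$, I would test at $t\bar w$ for a fixed $\bar w \in \bar H\setminus\{0\}$ and let $t\downarrow 0^+$: the elliptic quadratic part contributes an $O(t^2)$ term, whereas the concave singular term contributes a strictly negative quantity of order $O(t^{1-\gamma})$, so $J_n(t\bar w)<0$ for small $t$ and hence $J_n(w_n)<0$. Non-negativity of $w_n$ is obtained either by replacing $w$ with $|w|$ in the functional or by testing the Euler--Lagrange equation against $w_n^-$. To place $w_n$ inside the open set $\bar H=\{\|u\|_{L^{2_s^*}}<1\}$, I would combine the a priori $H_0^s$-bound implied by $J_n(w_n)<0$ with the Sobolev constant $\mathbb{S}_{s,2}$ from \eqref{best} to translate the $H_0^s$-bound into the required $L^{2_s^*}$-bound.

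For strict positivity with a quantitative lower bound on compact subsets, I would compare $w_n$ to the solution $\underline{w}_n$ of the auxiliary problem \eqref{helping}, whose existence and the crucial bound $\underline{w}_n\ge C_\omega>0$ on $\omega\subset\subset\Omega$ are provided by the cited results of Ghosh et al. The key point is that the right-hand side of the equation for $w_n$ exceeds that of $\underline{w}_n$ by the non-negative quantity $\alpha w_n/|x|^{2s}+\lambda w_n$, so testing $(-\Delta)^s(\underline{w}_n-w_n)$ against $(\underline{w}_n-w_n)^+$ in an Abdellaoui-type weak comparison principle forces $w_n\ge\underline{w}_n$ pointwise, giving both positivity and the compact-set lower bound. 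I expect this comparison step to be the principal technical obstacle: the critical Hardy potential rules out standard maximum principles, and it is precisely the assumption $\alpha<m_\lambda C_H$ that preserves coercivity of the operator $(-\Delta)^s-\alpha|x|^{-2s}-\lambda$ and makes the comparison test function admissible.
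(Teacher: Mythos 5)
Your proposal follows essentially the same route as the paper: the direct method applied to the same Euler--Lagrange functional $J_n$ (with coercivity and weak lower semicontinuity coming from Corollary \ref{coercive}), nontriviality via $J_n(t\bar w)<0$ for small $t$ using the $O(t^{1-\gamma})$ dominance of the concave term, and positivity with the compact-set lower bound obtained by comparison with the auxiliary problem \eqref{helping} of Ghosh et al.\ via the Abdellaoui-type comparison principle. You additionally flag two points the paper passes over silently --- the sign of the minimizer and the fact that the minimizer must land in the interior of the open set $\bar H$ --- which is a welcome refinement rather than a departure.
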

\noindent Let us consider the second problem.
\begin{equation}\label{4p4}\tag{$P^2_{\beta,n}$}
\begin{split}
(-\Delta)^sv_n-\alpha\frac{v_n}{|x|^{2s}}+f_n(x,v_n)&=\lambda v_n+\beta\left(\int_{\Omega}\frac{(w_n+v_n)^{2_b^*}(y)}{|x-y|^b}dy\right)(w_n+v_n)^{2_b^*-1}~\text{in}~\Omega,\\
v_n&>0~\text{in}~\Omega,\\
v_n&= 0~\text{in}~\mathbb{R}^N\setminus\Omega,
\end{split}
\end{equation}
where $w_n$ is the positive weak solution of $(P^1_{\beta,n})$ obtained from Lemma $\ref{weak first}$ and the function $f_n:\Omega\times\mathbb{R}\rightarrow \mathbb{R}\cup\{-\infty\}$ is given by
\begin{eqnarray}\label{gn}
\begin{split}
f_n(x,s)=&\begin{cases}
\frac{1}{(w_n(x)+\frac{1}{n})^\gamma}-\frac{1}{(s+w_n(x)+\frac{1}{n})^\gamma}&\text{if}~ s+w_n(x)+\frac{1}{n}>0\\ 
-\infty&\text{otherwise}.
\end{cases}
\end{split}
\end{eqnarray}
For $(x,s)\in \Omega\times\mathbb{R}$, let us denote $F_n(x,s)=\int_{0}^{s}f_n(x,\tau)d\tau$. The corresponding energy functional $J_{\beta,n}:H_0^s(\Omega)\rightarrow (-\infty,\infty]$ of $(P^2_{\beta,n})$ is defined by
\begin{eqnarray}
\begin{split}
J_{\beta,n}(v_n)=&\begin{cases}
\frac{1}{2}\left(\int_{\mathbb{R}^{2N}}\frac{|v_n(x)-v_n(y)|^2}{|x-y|^{N+2s}}dxdy-\alpha\int_{\Omega}\frac{v_n^2}{|x|^{2s}}dx-\lambda\int_{\Omega}v_n^2dx\right)\\+\int_{\Omega}F_n(x,v_n)dx-\frac{\beta}{22_s^*}\int_{\Omega}\int_{\Omega}\frac{(v_n+w_n)^{2_b^*}(x)(v_n+w_n)^{2_b^*}(y)}{|x-y|^b}dxdy&\text{if}~ F_n(.,v_n)\in L^1(\Omega)\\ 
\infty&\text{otherwise}.
\end{cases}
\end{split}
\end{eqnarray}
Further, 
\begin{align}
\langle J^\prime_{\beta,n}(v_n),v\rangle&=\int_{\mathbb{R}^{2N}}\frac{(v_n(x)-v_n(y))(v(x)-v(y)}{|x-y|^{N+2s}}dxdy-\alpha\int_{\Omega}\frac{v_n v}{|x|^{2s}}dx-\lambda\int_{\Omega}v_n vdx\nonumber\\&~~~~+\int_{\Omega}f_n(x,v_n)vdx-\beta\int_{\Omega}\int_{\Omega}\frac{(v_n+w_n)^{2_b^*}(y)(v_n+w_n)^{2_b^*-1}(x)v(x)}{|x-y|^b}dxdy\nonumber
\end{align}
for any $v\in H_0^s(\Omega)$. Let us denote $$H_n=\{u\in \bar{H}:\|u+w_n\|_{L^{2_s^*}(\Omega)}=1\}.$$
\begin{definition}
We say a function $v_n\in H_n$ is a weak solution of $(P^2_{\beta,n})$ if $v_n$ is a critical point of the energy functional $J_{\beta,n}$.	
\end{definition}
\begin{lemma}\label{P-S}
	The functional $J_{\beta,n}$ satisfies the Palais-Smale (P-S) condition in $H_n$ for energy level
	$$c<\frac{1}{2}\left(\frac{N-b+2s}{2N-s}\right)\frac{\mathbb{S}_{C,b}^{\frac{2N-b}{N-b+2s}}}{\beta^{\frac{N-2s}{N-b+2s}}}-\frac{C(N,b)\beta}{22_b^*},$$
	where $\mathbb{S}_{C,b}$ is the best Sobolev constant defined in $\eqref{best sobolev choquard}$ and $C(N,b)$ is the sharp constant in the Hardy-Littlewood-Sobolev Inequality given in $\eqref{sharp constant}$.
	\begin{proof}
		Consider a (P-S) sequence $(v_{n,j})$ of $J_{\beta,n}$ in $H_n$, i.e. $J_{\beta,n}(v_{n,j})\rightarrow c$ and $J^\prime_{\beta,n}(v_{n,j})\rightarrow 0$ as $j\rightarrow\infty$. It is easy to show that  $J_{\beta,n}$ is coercive when restricted to $H_n$. Thus, $(v_{n,j})$ is bounded in $H_0^s(\Omega)$ and there exists $v_n$ in $H_0^s(\Omega)$ such that, up to a sub sequential level, $v_{n,j}\rightarrow v_n$ weakly in $H_0^s(\Omega)$. We now claim the following.\\
		{\it Claim:} $v_{n,j}\rightarrow v_n$ strongly in $H_0^s(\Omega)$ and $v_n\in H_n$.\\
		Let $\|v_{n,j}-v_n\|_{H_0^s(\Omega)}^2\rightarrow a^2$ and $\int_{\Omega}\int_{\Omega}\frac{(v_{n,j}-v_n)^{2_b^*}(x)(v_{n,j}-v_n)^{2_b^*}(y)}{|x-y|^b}dxdy\rightarrow d^{22_b^*}$ as $j\rightarrow\infty$. Thus, we have
		\begin{align}
		&\langle J^\prime_{\beta,n}(v_{n,j})-J^\prime_{\beta,n}(v_n),v_{n,j}-v_n\rangle\nonumber\\&=\|v_{n,j}-v_n\|_{H_0^s(\Omega)}^2-\alpha\|v_{n,j}-v_n\|_{NH}^2-\lambda\|v_{n,j}-v_n\|_{L^2(\Omega)}^2+\int_\Omega(f_n(x,v_{n,j})-f_n(x,v_n))(v_{n,j}-v_n)\nonumber\\&~~~~-\beta\int_{\Omega}\int_{\Omega}\frac{(v_{n,j}+w_n)^{2_b^*}(v_{n,j}+w_n)^{2_b^*-1}(v_{n,j}-v_n)}{|x-y|^b}dxdy\nonumber\\&~~~~+\beta\int_{\Omega}\int_{\Omega}\frac{(v_{n}+w_n)^{2_b^*}(v_{n}+w_n)^{2_b^*-1}(v_{n,j}-v_n)}{|x-y|^b}dxdy.\nonumber
		\end{align}
		This implies,
		\begin{align}
&\langle J^\prime_{\beta,n}(v_{n,j})-J^\prime_{\beta,n}(v_n),v_{n,j}-v_n\rangle\nonumber\\&\geq \left(m_\lambda-\frac{\alpha}{C_H}\right) \|v_{n,j}-v_n\|_{H_0^s(\Omega)}^2-\beta\int_{\Omega}\int_{\Omega}\frac{(v_{n,j}-v_n)^{2_b^*}(v_{n,j}-v_n)^{2_b^*}+(v_{n}+w_n)^{2_b^*}(v_{n}+w_n)^{2_b^*}}{|x-y|^b}dxdy\nonumber\\&~~~~+\beta\int_{\Omega}\int_{\Omega}\frac{(v_{n,j}+w_n)^{2_b^*}(v_{n,j}+w_n)^{2_b^*-1}(v_{n}+w_n)}{|x-y|^b}dxdy\nonumber\\&~~~~+\int_\Omega(f_n(x,v_{n,j})-f_n(x,v_n))(v_{n,j}-v_n)+\beta\int_{\Omega}\int_{\Omega}\frac{(v_{n}+w_n)^{2_b^*}(v_{n}+w_n)^{2_b^*-1}(v_{n,j}-v_n)}{|x-y|^b}dxdy.\nonumber
		\end{align}
		On using the Brezis-Lieb Lemma \cite{Brezis}, Hardy-Littlewood-Sobolev Inequality (Proposition $\ref{HLS}$), Corollary $\ref{coercive}$ and passing the limit $j\rightarrow \infty$ in the above equation we get
		\begin{equation}\label{first ineq}
		\beta d^{22_b^*}\geq \left(m_\lambda-\frac{\alpha}{C_H}\right)a^2.
		\end{equation}
		From $\eqref{best sobolev choquard}$ we already have $a^2\geq \mathbb{S}_{C,b}d^2$. Thus, by simplification we obtain
		\begin{equation}\label{second ineq}
		d\geq \left(\frac{(m_\lambda-\frac{\alpha}{C_H})\mathbb{S}_{C,b}}{\beta}\right)^{\frac{N-2s}{2(N-b+2s)}}.
		\end{equation}
	We have	the sequence $(v_{n,j})$ is a (P-S) sequence in $H_n$ and by the choice of $\alpha, \lambda$, clearly $\left(\frac{N-b+2s}{2N-b}\right)^{\frac{N-b+2s}{2N-b}}\leq m_\lambda-\frac{\alpha}{C_H}$. Now applying Corollary $\ref{coercive}$, $\eqref{sharp constant}$, $\eqref{first ineq}$ and $\eqref{second ineq}$ we have,
		\begin{align}
		c&=\lim\limits_{j\rightarrow\infty}J_{\beta,n}(v_{n,j})\nonumber\\&=	\frac{1}{2}\lim\limits_{j\rightarrow\infty}\left(\int_{\mathbb{R}^{2N}}\frac{|v_{n,j}(x)-v_{n,j}(y)|^2}{|x-y|^{N+2s}}dxdy-\alpha\int_{\Omega}\frac{v_{n,j}^2}{|x|^{2s}}dx-\lambda\int_{\Omega}v_{n,j}^2dx\right)\nonumber\\&~~~~+\lim
		\limits_{j\rightarrow\infty}\left(\int_{\Omega}F_n(x,v_{n,j})dx-\frac{\beta}{22_s^*}\int_{\Omega}\int_{\Omega}\frac{(v_{n,j}+w_n)^{2_b^*}(x)(v_{n,j}+w_n)^{2_b^*}(y)}{|x-y|^b}dxdy\right)\nonumber\\&\geq  \lim\limits_{j\rightarrow\infty}\left(\frac{1}{2}\left(m_\lambda-\frac{\alpha}{C_H}\right)\|v_{n,j}\|_{H_0^s(\Omega)}^2-\frac{C(N,b)\beta}{22_b^*}\|v_{n,j}+w_n\|^{22_b^*}_{L^{2^*_s}(\Omega)}\right)\nonumber\\&\geq\lim\limits_{j\rightarrow\infty}\frac{1}{2}\left(m_\lambda-\frac{\alpha}{C_H}\right)\mathbb{S}_{C,b}\left(\int_{\Omega}\int_{\Omega}\frac{v_{n,j}^{2_b^*}(x)v_{n,j}^{2_b^*}(y)}{|x-y|^b}dxdy\right)^{\frac{2}{22^*_b}}-\frac{C(N,b)\beta}{22_b^*}\nonumber\\&\geq \frac{1}{2}\left(m_\lambda-\frac{\alpha}{C_H}\right)d^2\mathbb{S}_{C,b}-\frac{C(N,b)\beta}{22_b^*}\nonumber\\&\geq\frac{1}{2}\left(m_\lambda-\frac{\alpha}{H}\right)\mathbb{S}_{C,b}\left(\frac{(m_\lambda-\frac{\alpha}{C_H})\mathbb{S}_{C,b}}{\beta}\right)^{\frac{N-2s}{N-b+2s}}-\frac{C(N,b)\beta}{22_b^*}\nonumber\\&=\frac{1}{2}\frac{(\mathbb{S}_{C,b}(m_\lambda-\frac{\alpha}{C_H}))^{\frac{2N-b}{N-b+2s}}}{\beta^{\frac{N-2s}{N-b+2s}}}-\frac{C(N,b)\beta}{22_b^*}\nonumber\\&\geq \frac{1}{2}\left(\frac{N-b+2s}{2N-b}\right)\frac{\mathbb{S}_{C,b}^{\frac{2N-b}{N-b+2s}}}{\beta^{\frac{N-2s}{N-b+2s}}}-\frac{C(N,b)\beta}{22_b^*}.\nonumber
		\end{align}
		This is a contradiction to our assumption $$c<\frac{1}{2}\left(\frac{N-b+2s}{2N-b}\right)\frac{\mathbb{S}_{C,b}^{\frac{2N-b}{N-b+2s}}}{\beta^{\frac{N-2s}{N-b+2s}}}-\frac{C(N,b)\beta}{22_b^*}.$$ 
		Thus, $a=0$ and $\lim\limits_{j\rightarrow\infty}\|v_{n,j}-v_n\|_{H_0^s(\Omega)}=0$. Hence, the claim.  
	\end{proof}	
\end{lemma}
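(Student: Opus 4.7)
The plan is to follow the standard threshold-compactness framework for critical problems: establish boundedness of the Palais-Smale sequence, extract a weak limit, and then rule out loss of compactness using a Brezis-Lieb decomposition together with the assumed upper bound on $c$. First I would verify that any sequence $(v_{n,j})\subset H_n$ with $J_{\beta,n}(v_{n,j})\to c$ and $J'_{\beta,n}(v_{n,j})\to 0$ is bounded in $H_0^s(\Omega)$. This relies on Corollary \ref{coercive} to control the quadratic Hardy-eigenvalue part, on the fact that $w_n\geq C_\omega>0$ keeps $f_n(\cdot,v_{n,j})$ locally bounded and of the right sign on the relevant set, and on the constraint $\|v_{n,j}+w_n\|_{L^{2_s^*}(\Omega)}=1$ which together with Proposition \ref{HLS} bounds the Choquard term. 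Passing to a subsequence yields $v_{n,j}\rightharpoonup v_n$ in $H_0^s(\Omega)$, strongly in $L^q(\Omega)$ for every $q<2_s^*$, and a.e.\ in $\Omega$.

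The core step is upgrading the weak limit to strong convergence. Setting $a^2=\lim\|v_{n,j}-v_n\|_{H_0^s(\Omega)}^2$ and $d^{22_b^*}=\lim\int_\Omega\int_\Omega\frac{(v_{n,j}-v_n)^{2_b^*}(x)(v_{n,j}-v_n)^{2_b^*}(y)}{|x-y|^b}\,dxdy$, I would apply Brezis-Lieb-type splittings both to the Gagliardo seminorm and to the double Choquard integral, using a.e.\ convergence together with Proposition \ref{HLS} (with a mixed exponent triple) to show the cross terms vanish. Testing $J'_{\beta,n}(v_{n,j})-J'_{\beta,n}(v_n)$ against $v_{n,j}-v_n$, absorbing the critical Hardy contribution on the difference through the factor $(m_\lambda-\alpha/C_H)$ guaranteed by Corollary \ref{coercive}, and using the compactness of $H_0^s(\Omega)\hookrightarrow L^2(\Omega)$ to annihilate the $\lambda$- and $f_n$-pieces, one obtains in the limit the key inequality $(m_\lambda-\alpha/C_H)\,a^2\leq\beta\, d^{22_b^*}$. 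Combined with the Sobolev-Choquard inequality $\mathbb{S}_{C,b}\,d^2\leq a^2$ from \eqref{best sobolev choquard}, this dichotomy forces either $d=0$ (and hence $a=0$, which concludes the lemma) or the quantitative lower bound $d^2\geq\bigl((m_\lambda-\alpha/C_H)\,\mathbb{S}_{C,b}/\beta\bigr)^{(N-2s)/(N-b+2s)}$.

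To exclude the second alternative I would feed this lower bound back into the limiting energy. Writing $c=\lim J_{\beta,n}(v_{n,j})$ and estimating from below using Corollary \ref{coercive} together with the sharp Hardy-Littlewood-Sobolev bound \eqref{sharp constant} applied to $v_{n,j}+w_n$, and finally invoking the parametric assumption $\bigl(\tfrac{N-b+2s}{2N-b}\bigr)^{(N-b+2s)/(2N-b)}\leq m_\lambda-\alpha/C_H$, I would produce a chain terminating in $c\geq\tfrac12\bigl(\tfrac{N-b+2s}{2N-b}\bigr)\mathbb{S}_{C,b}^{(2N-b)/(N-b+2s)}\beta^{-(N-2s)/(N-b+2s)}-\tfrac{C(N,b)\beta}{22_b^*}$, contradicting the hypothesis on $c$. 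Therefore $d=0$, $a=0$, and $v_{n,j}\to v_n$ strongly in $H_0^s(\Omega)$; the membership $v_n\in H_n$ then follows from continuity of the $L^{2_s^*}$ norm along strongly $H_0^s$-convergent sequences.

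The main obstacle I anticipate is executing the Brezis-Lieb decomposition for the nonlocal Choquard term cleanly, both in the functional and in its derivative, so that the cross terms generated by expanding $(v_{n,j}+w_n)^{2_b^*}=(v_n+w_n+(v_{n,j}-v_n))^{2_b^*}$ under the $|x-y|^{-b}$ convolution really vanish; this requires applying Proposition \ref{HLS} with asymmetric exponents and exploiting uniform $L^{2_s^*}$ bounds. A secondary technical point is keeping the correct sign and direction in the key inequality $(m_\lambda-\alpha/C_H)\,a^2\leq\beta\, d^{22_b^*}$ despite the presence of the positive fixed mass $w_n$ inside the Choquard term, which must be treated as a lower-order perturbation via strong $L^q$ convergence for $q<2_s^*$.
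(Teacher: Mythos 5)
Your proposal follows essentially the same route as the paper's proof: coercivity of $J_{\beta,n}$ on $H_n$ for boundedness, Brezis--Lieb decomposition of the Gagliardo and Choquard terms to derive $(m_\lambda-\alpha/C_H)a^2\leq\beta d^{22_b^*}$ together with $\mathbb{S}_{C,b}d^2\leq a^2$, and the energy lower bound contradicting the threshold on $c$. Your explicit treatment of the dichotomy ($d=0$ versus the quantitative lower bound on $d$) is in fact slightly more careful than the paper, which divides by $d^2$ without remarking on the trivial case, but the argument is the same.
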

\noindent Let us consider the following sequence $(Z_\epsilon)$ given by 
$$Z_\epsilon=\epsilon^{-\frac{N-2s}{2}}\mathbb{S}_{s,2}^{\frac{(N-b)(2s-N)}{4(N-b+2s)}} C(N,b)^{\frac{2s-N}{2(N-b+2s)}} z^*\left(\frac{x}{\epsilon}\right), ~~x\in \mathbb{R}^N,$$
where $z^*(x)=\bar{z}\left(\frac{x}{\mathbb{S}_{2,s}^{\frac{1}{2s}}}\right)$, $\bar{z}(x)=\frac{\tilde{z}(x)}{\|\tilde{z}\|_{L^{2_s^*}(\Omega)}}$ and $\tilde{z}(x)=\eta_1(\eta_2^2+|x|^2)^{-\frac{N-2s}{2}}$,  $\eta_1\in \mathbb{R}^N\setminus\{0\},~\eta_2>0$. By Lemma $\ref{best constant achieve}$, for each $\epsilon>0$, corresponding $Z_\epsilon$ satisfies the problem
$$(-\Delta)^s z=(|x|^{-b}* |z|^{2_b^*})|z|^{2_b^*-2}v~~\text{ in }\mathbb{R}^N.$$
Let us assume $0\in\Omega$. Consider $\xi\in C_c^{\infty}(\mathbb{R}^N)$ such that $0\leq\xi\leq 1$, for fixed $\delta>0$, $B_{4\delta}\subset\Omega$, $\xi\equiv 0$ in $\mathbb{R}^N\setminus B_{2\delta}$, $\xi\equiv1$ in $B_\delta$. Define  $$\Phi_\epsilon(x)=\xi(x)Z_\epsilon(x).$$ Then $\Phi_\epsilon=0$ in $\mathbb{R}^N\setminus\Omega$. By Proposition 6.2 of Giacomoni et al. \cite{Giacomoni arxiv}, there exists $a_1,a_2,a_3,a_4>0$ such that for $1<q<\min\{2,\frac{N}{N-2s}\}$ we have the following four estimates.
\begin{align}
\int_{\mathbb{R}^{2N}}\frac{|\Phi_\epsilon(x)-\Phi_\epsilon(y)|^2}{|x-y|^{N+2s}}dxdy&\leq \mathbb{S}_{C,b}^{\frac{2N-b}{N-b+2s}}+a_1\epsilon^{N-2s},\nonumber\\
\int_{\Omega}\int_{\Omega}\frac{|\Phi_\epsilon|^{2_b^*}(x)|\Phi_\epsilon|^{2_b^*}(y)}{|x-y|^b}dxdy&\geq \mathbb{S}_{C,b}^{\frac{2N-b}{N-b+2s}}-a_2\epsilon^N,\nonumber\\
\int_{\Omega}|\Phi_\epsilon|^{q}dx&\leq a_3\epsilon^{(N-2s)q/2},\nonumber\\\int_{\Omega}\int_{\Omega}\frac{|\Phi_\epsilon|^{2_b^*}(x)|\Phi_\epsilon|^{2_b^*}(y)}{|x-y|^b}dxdy&\leq \mathbb{S}_{C,b}^{\frac{2N-b}{N-b+2s}}+a_4\epsilon^N.\nonumber
\end{align}
\begin{lemma}\label{upper bound}
	There exists $\bar{\beta}>0$ such that for $\beta\in (0,\bar{\beta})$ and for $\epsilon>0$ sufficienty small,
	$$\sup\{J_{\beta,n}(t\Phi_\epsilon): t\geq 0\}<\frac{1}{2}\left(\frac{N-b+2s}{2N-b}\right)\frac{\mathbb{S}_{C,b}^{\frac{2N-b}{N-b+2s}}}{\beta^{\frac{N-2s}{N-b+2s}}}-\frac{C(N,b)\beta}{22_b^*}.$$
	\begin{proof}
		Clearly for $\beta<\left(\frac{2_b^*}{C(N,b)}\left(\frac{N-b+2s}{2N-b}\right)\right)^{\frac{N-b+2s}{2N-b}}\mathbb{S}_{C,b}$, we have $$\frac{1}{2}\left(\frac{N-b+2s}{2N-b}\right)\frac{\mathbb{S}_{C,b}^{\frac{2N-b}{N-b+2s}}}{\beta^{\frac{N-2s}{N-b+2s}}}-\frac{C(N,b)\beta}{22_b^*}>0.$$
	For a fixed sufficiently small $\epsilon>0$ and for any $t\geq0$, 
		\begin{align}
		J_{\beta,n}(t\Phi_\epsilon)&=\frac{t^2}{2}\left(\int_{\mathbb{R}^{2N}}\frac{|\Phi_\epsilon(x)-\Phi_\epsilon(y)|^2}{|x-y|^{N+2s}}dxdy-\alpha\int_{\Omega}\frac{|\Phi_\epsilon|^2}{|x|^{2s}}-\lambda\int_{\Omega}|\Phi_\epsilon|^2\right)+\int_{\Omega}F_n(x,t\Phi_\epsilon)dx\nonumber\\&~~~~-\frac{\beta}{22_b^*}\int_{\Omega}\int_{\Omega}\frac{|t\Phi_\epsilon+w_n|^{2_b^*}|t\Phi_\epsilon+w_n|^{2_b^*}}{|x-y|^b}dxdy\nonumber\\&\leq\frac{t^2}{2}\int_{\mathbb{R}^{2N}}\frac{|\Phi_\epsilon(x)-\Phi_\epsilon(y)|^2}{|x-y|^{N+2s}}dxdy+\int_{\Omega}\frac{|t\Phi_\epsilon|}{(w_n+1/n)^\gamma}\nonumber\\&~~~-\frac{1}{1-\gamma}\int_{\Omega}(t\Phi_\epsilon+w_n+1/n)^{1-\gamma}-(w_n+1/n)^{1-\gamma}\nonumber\\&~~~~-\frac{\beta}{22_b^*}\int_{\Omega}\int_{\Omega}\frac{|t\Phi_\epsilon+w_n|^{2_b^*}|t\Phi_\epsilon+w_n|^{2_b^*}}{|x-y|^b}dxdy\nonumber\\&\leq \frac{t^2}{2}(\mathbb{S}_{C,b}^{\frac{2N-b}{N-b+2s}}+a_1\epsilon^{N-2s})+tn^\gamma\int_{\Omega}|\Phi_\epsilon|+\frac{C(N,b)\beta}{22_b^*}-\frac{C(N,b)\beta}{22_b^*}\nonumber\\&~~~-\frac{1}{1-\gamma}\int_{\Omega}(t\Phi_\epsilon+w_n+1/n)^{1-\gamma}-(w_n+1/n)^{1-\gamma}-\frac{\beta t^{22_b^*}}{22_b^*}\int_{\Omega}\int_{\Omega}\frac{|\Phi_\epsilon|^{2_b^*}|\Phi_\epsilon|^{2_b^*}}{|x-y|^b}dxdy\nonumber\\&\leq \frac{t^2}{2}(\mathbb{S}_{C,b}^{\frac{2N-b}{N-b+2s}}+a_1\epsilon^{N-2s})+tn^\gamma a_3^{1/q}\epsilon^{(N-2s)/2}+\frac{C(N,b)\beta}{22_b^*}-\frac{C(N,b)\beta}{22_b^*}\nonumber\\&~~~-\frac{1}{1-\gamma}\int_{\Omega}(t\Phi_\epsilon+w_n+1/n)^{1-\gamma}-(w_n+1/n)^{1-\gamma}-\frac{\beta t^{22_b^*}}{22_b^*}(\mathbb{S}_{C,b}^{\frac{2N-b}{N-b+2s}}-a_2\epsilon^N).
		\end{align}
		Assume $\beta\leq1$ and define  $g:\mathbb{R}^+\rightarrow\mathbb{R}$ as
		\begin{align}
		g(t)&=\frac{C(N,b)\beta}{22_b^*}-\frac{1}{1-\gamma}\int_{\Omega}(t\Phi_\epsilon+w_n+1/n)^{1-\gamma}-(w_n+1/n)^{1-\gamma}\nonumber\\&\leq \frac{C(N,b)}{22_b^*}-\frac{1}{1-\gamma}\int_{\Omega}(t\Phi_\epsilon+w_n+1/n)^{1-\gamma}-(w_n+1/n)^{1-\gamma}\nonumber\\&\leq \frac{C(N,b)}{22_b^*}-\frac{1}{1-\gamma}\int_{\Omega}(t\Phi_\epsilon)^{1-\gamma} +C.
		\end{align} 
		Hence, $g(t)\rightarrow -\infty$ as $t\rightarrow\infty$. Therefore, there exists $\bar{t}>0$ such that $g(t)\leq0$ for every $t\geq \bar{t}$. First consider the case $t\geq \bar{t}$ and we have
		\begin{align}
		J_{\beta,n}(t\Phi_\epsilon)&\leq \frac{t^2}{2}(\mathbb{S}_{C,b}^{\frac{2N-b}{N-b+2s}}+a_1\epsilon^{N-2s})+tn^\gamma a_3^{1/q}\epsilon^{(N-2s)/2}-\frac{\beta t^{22_b^*}}{22_b^*}(\mathbb{S}_{C,b}^{\frac{2N-b}{N-b+2s}}-a_2\epsilon^N)-\frac{C(N,b)\beta}{22_b^*}\nonumber\\&= \bar{g}_\epsilon(t).\nonumber
		\end{align}
		Clearly, $\bar{g}_\epsilon$ attains the maximum value at $$t_\beta=\left(\frac{1}{\beta}\right)^{\frac{2(N-b+2s)}{N-2s}}+o(\epsilon^{(N-2s)/2}).$$
		This implies,
		\begin{align}
		J_{\beta,n}(t\Phi_\epsilon)&\leq \frac{1}{2}\left(\frac{N-b+2s}{2N-b}\right)\frac{\mathbb{S}_{C,b}^{\frac{2N-b}{N-b+2s}}}{\beta^{\frac{N-2s}{N-b+2s}}}-\frac{C(N,b)\beta}{22_b^*}+o(\epsilon^{(N-2s)/2})\nonumber\\&< \frac{1}{2}\left(\frac{N-b+2s}{2N-b}\right)\frac{\mathbb{S}_{C,b}^{\frac{2N-b}{N-b+2s}}}{\beta^{\frac{N-2s}{N-b+2s}}}-\frac{C(N,b)\beta}{22_b^*}.
		\end{align}
		For the second case, i.e. for $t<\bar{t}$,
		\begin{align}
		J_{\beta,n}(t\Phi_\epsilon)&\leq\frac{t^2}{2}\int_{\mathbb{R}^{2N}}\frac{|\Phi_\epsilon(x)-\Phi_\epsilon(y)|^2}{|x-y|^{N+2s}}dxdy+\int_{\Omega}\frac{|t\Phi_\epsilon|}{(w_n+1/n)^\gamma}\nonumber\\&\leq \frac{t^2}{2}(\mathbb{S}_{C,b}^{\frac{2N-b}{N-b+2s}}+a_1\epsilon^{N-2s})+tn^\gamma a_3^{1/q}\epsilon^{(N-2s)/2}\nonumber\\&<\frac{\bar{t}^2}{2}(\mathbb{S}_{C,b}^{\frac{2N-b}{N-b+2s}}+a_1\epsilon^{N-2s})+\bar{t}n^\gamma a_3^{1/q}\epsilon^{(N-2s)/2}.\nonumber
		\end{align}
		Choose $\beta^*>0$ depending on $\bar{t},N,s,\mathbb{S}_{C,b}$ such that for $\beta\in(0,\beta^*)$ we get 
		$$J_{\beta,n}(t\Phi_\epsilon)<\frac{1}{2}\left(\frac{N-b+2s}{2N-b}\right)\frac{\mathbb{S}_{C,b}^{\frac{2N-b}{N-b+2s}}}{\beta^{\frac{N-2s}{N-b+2s}}}-\frac{C(N,b)\beta}{22_b^*}.$$
	Denote $\bar{\beta}=\min\{1,\left(\frac{2_b^*}{C(N,b)}\left(\frac{N-b+2s}{2N-b}\right)\right)^{\frac{N-b+2s}{2N-b}}\mathbb{S}_{C,b},\beta^*\}$. Thus, for $\beta\in(0,\bar{\beta})$ we obtain $$\sup\{J_{\beta,n}(t\Phi_\epsilon): t\geq 0\}<\frac{1}{2}\left(\frac{N-b+2s}{2N-b}\right)\frac{\mathbb{S}_{C,b}^{\frac{2N-b}{N-b+2s}}}{\beta^{\frac{N-2s}{N-b+2s}}}-\frac{C(N,b)\beta}{22_b^*}.$$
		This concludes the proof.
	\end{proof}
\end{lemma}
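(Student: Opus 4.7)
The plan is to give an explicit pointwise upper bound for $J_{\beta,n}(t\Phi_\epsilon)$ as a function of $t$, and then optimise it separately in a ``large-$t$'' and a ``small-$t$'' regime. As a zeroth step, fix the target quantity
$$\Theta(\beta) := \tfrac{1}{2}\left(\tfrac{N-b+2s}{2N-b}\right)\tfrac{\mathbb{S}_{C,b}^{(2N-b)/(N-b+2s)}}{\beta^{(N-2s)/(N-b+2s)}}-\tfrac{C(N,b)\beta}{2\cdot 2_b^*}.$$
Since the first summand blows up like $\beta^{-(N-2s)/(N-b+2s)}$ as $\beta\to 0^+$, the positivity $\Theta(\beta)>0$ can be ensured by requiring $\beta$ smaller than an explicit constant depending only on $N,s,b,\mathbb{S}_{C,b},C(N,b)$. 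This is the first restriction on $\bar\beta$.

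For the upper bound on $J_{\beta,n}(t\Phi_\epsilon)$, first drop the two non-positive contributions $-\tfrac{\alpha}{2}\|\cdot\|_{NH}^2$ and $-\tfrac{\lambda}{2}\|\cdot\|_{L^2}^2$, which only weakens the inequality. Next split $F_n(x,t\Phi_\epsilon)$ according to the two summands of $f_n$: bound the first by the linear term $\int_\Omega t\Phi_\epsilon/(w_n+\tfrac{1}{n})^\gamma \leq t\,n^\gamma\int_\Omega\Phi_\epsilon$, and recognise the second as $-\tfrac{1}{1-\gamma}\int_\Omega\bigl[(t\Phi_\epsilon+w_n+\tfrac{1}{n})^{1-\gamma}-(w_n+\tfrac{1}{n})^{1-\gamma}\bigr]$, which is non-positive for $t\geq 0$. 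For the Choquard piece, use $(t\Phi_\epsilon+w_n)^{2_b^*}\geq (t\Phi_\epsilon)^{2_b^*}$ (since both summands are non-negative and $2_b^*\geq 1$) to extract a clean $t^{2\cdot 2_b^*}$ factor, and control the cross terms by an add-and-subtract trick with $C(N,b)\beta/(2\cdot 2_b^*)$ obtained from Proposition \ref{HLS}. Substituting the four asymptotic estimates on $\Phi_\epsilon$ from Giacomoni et al.\ then yields the pointwise bound
\begin{align*}
J_{\beta,n}(t\Phi_\epsilon) &\leq \tfrac{t^2}{2}\bigl(\mathbb{S}_{C,b}^{(2N-b)/(N-b+2s)}+a_1\epsilon^{N-2s}\bigr)+t\,n^\gamma a_3^{1/q}\epsilon^{(N-2s)/2}\\
&\quad -\tfrac{1}{1-\gamma}\int_\Omega\bigl[(t\Phi_\epsilon+w_n+\tfrac{1}{n})^{1-\gamma}-(w_n+\tfrac{1}{n})^{1-\gamma}\bigr] -\tfrac{\beta t^{2\cdot 2_b^*}}{2\cdot 2_b^*}\bigl(\mathbb{S}_{C,b}^{(2N-b)/(N-b+2s)}-a_2\epsilon^N\bigr),
\end{align*}
up to the additive constant $-C(N,b)\beta/(2\cdot 2_b^*)$ that will match the second summand of $\Theta(\beta)$.

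Now split into two cases. For large $t$, since $1-\gamma\in(0,1)$ the singular integral diverges to $-\infty$; hence there exists $\bar t>0$ (depending on $w_n,n,\epsilon$) such that for every $t\geq\bar t$ this term already absorbs $C(N,b)\beta/(2\cdot 2_b^*)$, leaving the purely polynomial function
$$\bar g_\epsilon(t) := \tfrac{t^2}{2}\bigl(\mathbb{S}_{C,b}^{(2N-b)/(N-b+2s)}+a_1\epsilon^{N-2s}\bigr)+ t\,n^\gamma a_3^{1/q}\epsilon^{(N-2s)/2} - \tfrac{\beta t^{2\cdot 2_b^*}}{2\cdot 2_b^*}\bigl(\mathbb{S}_{C,b}^{(2N-b)/(N-b+2s)}-a_2\epsilon^N\bigr) - \tfrac{C(N,b)\beta}{2\cdot 2_b^*}.$$
Since $2\cdot 2_b^*>2$, this function has a unique maximiser $t_\beta = \beta^{-(N-2s)/(2(N-b+2s))}+o(\epsilon^{(N-2s)/2})$, and a direct substitution gives $\bar g_\epsilon(t_\beta)=\Theta(\beta)+o(\epsilon^{(N-2s)/2})$; choosing $\epsilon$ small then turns this into a strict inequality. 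For $t<\bar t$, the bound reduces to $J_{\beta,n}(t\Phi_\epsilon)\leq \tfrac{\bar t^2}{2}\mathbb{S}_{C,b}^{(2N-b)/(N-b+2s)}+o(1)$, a bounded quantity that can be made smaller than $\Theta(\beta)$ by a further smallness condition $\beta\leq\beta^*$, because $\Theta(\beta)\to\infty$ as $\beta\to 0^+$. Taking $\bar\beta$ as the minimum of $1$, the positivity threshold for $\Theta$, and $\beta^*$ concludes the proof.

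The main obstacle is the construction of the pointwise bound itself, and in particular isolating the $(t\Phi_\epsilon)^{2\cdot 2_b^*}$ contribution inside the Choquard double integral without losing the sharp constant $\mathbb{S}_{C,b}^{(2N-b)/(N-b+2s)}$; the add-and-subtract with $C(N,b)\beta/(2\cdot 2_b^*)$ combined with the HLS inequality is essential, and provides precisely the negative offset that matches the second summand of $\Theta(\beta)$.
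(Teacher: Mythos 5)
Your proposal follows essentially the same route as the paper's proof: the same positivity threshold on $\beta$, the same pointwise bound obtained by dropping the Hardy and $L^2$ terms, splitting $F_n$ into a linear piece and a non-positive $(1-\gamma)$-power piece, adding and subtracting $C(N,b)\beta/(2\cdot 2_b^*)$ via the Hardy--Littlewood--Sobolev inequality, and the same large-$t$/small-$t$ case analysis with the auxiliary function that tends to $-\infty$. Your maximiser $t_\beta=\beta^{-\frac{N-2s}{2(N-b+2s)}}+o(\epsilon^{(N-2s)/2})$ is in fact the correct one (the exponent displayed in the paper, $\frac{2(N-b+2s)}{N-2s}$, is its reciprocal and appears to be a typo), and substituting it yields exactly the stated leading term.
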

\noindent The following is the existence theorem for $(P^2_{\beta,n})$ in $H_n$.
\begin{theorem}\label{second solution}
Let the assumptions on $\gamma,b,\lambda,\alpha$ are same as in Theorem $\ref{approx solution}$. Then there exists $\bar{\beta}>0$ such that for every $\beta\in(0,\bar{\beta})$, $(P^2_{\beta,n})$ admits a positive weak solution $v_n\in H_n$. 
\begin{proof}
The functional $J_{\beta,n}$ is bounded from below and  G$\hat{a}$teaux-differentiable on $H_n$.  Hence, it satisfies all the hypotheses of Theorem $\ref{eke}$, i.e. Ekeland variational principle. Thus, we can produce a Palais-Smale sequence $(v_{n,j})$ in $H_n$ of the functional $J_{\beta,n}$. By  Lemma $\ref{P-S}$ and Lemma $\ref{upper bound}$, $(v_{n,j})$ satisfies the (P-S) conditions and hence, up to a sub sequential level, $(v_{n,j})$ converges strongly to $v_n\in H_n$. This implies $v_n$ is a critical point of $J_{\beta,n}$ and therefore a weak solution of $(P_{\beta,n})$ in $H_n$ for any $\beta\in (0,\bar{\beta})$. 
\end{proof}
\begin{proof}[Proof of Theorem $\ref{approx solution}$]
According to Theorem $\ref{second solution}$, $v_n$ is a nontrivial weak solution to $(P^2_{\beta,n})$ in $H_n=\{u\in H_0^s(\Omega):\|u+w_n\|_{L^{2_s^*}(\Omega)}=1\}$, for every for $\beta\in(0,\bar{\beta})$, where $w_n$ is the weak solution of $(P^1_{\beta,n})$ from Lemma $\ref{weak first}$. Hence, the function $u_n=v_n+w_n$ is a positive weak solution of $(P_{\beta,n})$ in $H=\{u\in H_0^s(\Omega):\|u\|_{L^{2_s^*}(\Omega)}=1\}$ in the sense of Definition $\ref{weak form of apprpx}$.
\end{proof}
\end{theorem}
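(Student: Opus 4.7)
The plan is to apply the Ekeland variational principle directly on the constraint set $H_n$ and then combine Lemma \ref{P-S} with Lemma \ref{upper bound} to upgrade the minimizing sequence so produced to an actual critical point of $J_{\beta,n}$. First I would check the hypotheses of Theorem \ref{eke}: G\^ateaux differentiability of $J_{\beta,n}$ is straightforward from the explicit formula for $f_n$ together with the smoothness of the Choquard term; lower semicontinuity reduces to the weak l.s.c.\ of $\mathbb{H}_{\alpha,\lambda}$ given by Corollary \ref{coercive} plus Fatou's lemma on the singular primitive $F_n$; and boundedness from below on $H_n$ is the crucial structural input. The latter uses the constraint $\|v+w_n\|_{L^{2_s^*}(\Omega)} = 1$ together with the HLS estimate \eqref{sharp constant} to bound the Choquard contribution uniformly, while the sublinear term $F_n$ is dominated by a linear function of $|v|$; Corollary \ref{coercive} then pins $J_{\beta,n}$ from below on $H_n$.

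With these in place, Theorem \ref{eke} produces a sequence $(v_{n,j}) \subset H_n$ with $J_{\beta,n}(v_{n,j}) \to c := \inf_{H_n} J_{\beta,n}$ and $J'_{\beta,n}(v_{n,j}) \to 0$ in the dual norm (modulo a Lagrange multiplier that must be shown to vanish; see below). The decisive step is to verify $c < c^{*}$, where
\[
c^{*} = \frac{1}{2}\left(\frac{N-b+2s}{2N-b}\right)\frac{\mathbb{S}_{C,b}^{\frac{2N-b}{N-b+2s}}}{\beta^{\frac{N-2s}{N-b+2s}}} - \frac{C(N,b)\beta}{2\cdot 2_b^*}
\]
is the Palais-Smale threshold from Lemma \ref{P-S}. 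This is handled by testing $J_{\beta,n}$ along a suitably normalised truncated-bubble family $t\Phi_\epsilon$: Lemma \ref{upper bound} yields $\sup_{t \geq 0} J_{\beta,n}(t\Phi_\epsilon) < c^{*}$ for $\beta \in (0,\bar{\beta})$ and $\epsilon$ small, and since $t$ can be chosen to place the rescaled bubble into $H_n$, one obtains $c < c^{*}$.

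Once $c < c^{*}$ is secured, Lemma \ref{P-S} delivers a subsequence converging strongly to some $v_n \in H_0^s(\Omega)$; continuity of the $L^{2_s^*}$-norm places $v_n$ in $H_n$, and passing to the limit in the derivative equation shows that $v_n$ is a weak solution to $(P^2_{\beta,n})$. Positivity is forced by the structure of $F_n$, which is $+\infty$ on $\{s + w_n + 1/n \leq 0\}$: a finite-energy critical point must satisfy $v_n + w_n + 1/n > 0$ a.e.; testing the equation against $(v_n)^-$ and using the positivity of $w_n$ from Lemma \ref{weak first} together with the comparison principle of \cite{Abdellaoui} then forces $v_n \geq 0$, and a strong-maximum-type argument upgrades this to $v_n > 0$.

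The main obstacle is the passage from constrained to free Palais-Smale: Ekeland on the submanifold $H_n$ initially produces only a \emph{constrained} critical sequence, so the Lagrange multiplier $\mu_j$ associated with the constraint $\|u+w_n\|_{L^{2_s^*}}^{2_s^*} = 1$ must be shown to tend to zero. The standard route is to test the multiplier equation against a direction along which the constraint functional is non-degenerate at $v_{n,j}$ and to exploit $\|J'_{\beta,n}(v_{n,j})\|_{*} \to 0$ on that direction, but this needs to be carried out carefully so that $(v_{n,j})$ qualifies as a PS sequence in the sense used in Lemma \ref{P-S}. A secondary technicality is that $\Phi_\epsilon$ does not automatically sit in $H_n$: a rescaling by a factor of the form $1 + O(\epsilon^{(N-2s)/2})$ is required, and one must check that this correction does not spoil the strict inequality furnished by Lemma \ref{upper bound}.
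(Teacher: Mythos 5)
Your proposal follows the same route as the paper: Ekeland's variational principle on $H_n$ to produce a Palais--Smale sequence, Lemma \ref{upper bound} to place the energy level below the threshold of Lemma \ref{P-S}, and strong convergence of the sequence to a critical point $v_n\in H_n$. In fact you are more careful than the paper, whose entire proof amounts to your first paragraph: the two obstacles you flag --- showing that the Lagrange multiplier attached to the constraint $\|v+w_n\|_{L^{2_s^*}(\Omega)}=1$ vanishes so that the constrained Palais--Smale sequence is a genuine one in the sense used in Lemma \ref{P-S}, and rescaling $t\Phi_\epsilon$ into $H_n$ without destroying the strict inequality $c<c^*$ --- are both real and are passed over in silence in the paper, as is the positivity argument for $v_n$ that you sketch at the end.
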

\section{Existence of SOLA to $(P_\beta)$- Proof of Theorem $\ref{main theorem}$}\label{sec 3}
From the previous section, Section $\ref{sec 2}$, we have a weak solution $u_n$ of the approximating problem $(P_{\beta,n})$ in $H$. In this section, using some apriori estimates, we pass the limit $n\rightarrow \infty$ in the weak formulation of $(P_{\beta,n})$, i.e. in $\eqref{approximating weak}$, to obtain a SOLA to $(P_\beta)$. 
\begin{lemma}\label{lemma1}
	Let $u_n$ be a weak solution to $(P_{\beta,n})$ in $H$. Then the sequence $(u_n)$ is uniformly bounded in $W_0^{\bar{s},m}(\Omega)$ for every $\bar{s}<s$ and $m<\frac{N}{N-s}$.
	\begin{proof}
		 Let us fix a $k>0$ and define a truncation function $T_k:\mathbb{R}\rightarrow \mathbb{R}$ by
		\begin{eqnarray}
		\begin{split}
		T_k(u_n)=&\begin{cases}
		u_n&\text{if}~ u_n\leq k\\ 
		k &\text{if}~ u_n>k.\nonumber 
		\end{cases}
		\end{split}
		\end{eqnarray}
Choose $\phi=T_k(u_n)$ in $\eqref{approximating weak}$ as a test function. Thus, we have
		\begin{align}\label{bdd}
		\int_{\mathbb{R}^N}|(-\Delta)^{s/2}T_k(u_n)|^2&\leq\int_{\mathbb{R}^{N}}(-\Delta)^{s/2}u_n\cdot(-\Delta)^{s/2}T_k(u_n)\nonumber\\&=\alpha\int_\Omega\frac{u_n T_k(u_n)}{|x|^{2s}}+\lambda\int_\Omega u_n T_k(u_n)+\beta\int_\Omega\int_{\Omega}\frac{ u_n^{2_b^*}u_n^{2_b^*-1}T_k(u_n)}{|x-y|^b}dxdy
		\nonumber\\&~~~~	+\int_{\Omega}\frac{1}{(u_n+\frac{1}{n})^\gamma}T_k(u_n)+\int_{\Omega}\mu_n T_k(u_n).
		\end{align}
	The sequence $(u_n)\subset H$, i.e. $\|u_n\|_{L^{2_s^*}(\Omega)}=1$ for each $n$ and  $(\mu_n)$ is bounded in $L^1(\Omega)$. So, using Proposition $\ref{HLS}$, the above equation $\eqref{bdd}$ becomes
		\begin{align}\label{4p6}
		\int_{\mathbb{R}^N}|(-\Delta)^{s/2}T_k(u_n)|^2&\leq \alpha k\int_\Omega\frac{u_n}{|x|^{2s}}+\lambda\int_\Omega u_n^2+\int_{\Omega} u_n^{1-\gamma}+k\|\mu_n\|_{L^1(\Omega)}+\beta C(N,b)\nonumber\\&\leq \alpha k\int_\Omega\frac{u_n}{|x|^{2s}}+\lambda C_1\|u_n\|^2_{L^{2_s^*}(\Omega)}+ C_2\|u_n\|_{L^{2_s^*}(\Omega)}^{1-\gamma}+C_3k+\beta C(N,b)\nonumber\\&\leq \alpha k\int_\Omega\frac{u_n}{|x|^{2s}}+ C_4k.
		\end{align}
		Since $N>2s$, this implies $(N-2s(2_s^*)^\prime)>0$ where $(2_s^*)^\prime$ is the H\"{o}lder conjugate of $2_s^*$. Therefore, using the H\"{o}lder's inequality we get
		\begin{align}\label{hardy bounded}
		\int_\Omega\frac{u_n}{|x|^{2s}}&\leq\left\|x|^{-2s}\right\|_{L^{(2_s^*)^\prime}(\Omega)}\|u_n\|_{L^{2_s^*}(\Omega)}\nonumber\\&=\left\||x|^{-2s}\right\|_{L^{(2_s^*)^\prime}(\Omega)}\nonumber\\&\leq C_5.
		\end{align}
		Hence, $(u_n)$ is bounded in $L^1(\Omega,|x|^{-2s})$ and from $\eqref{4p6}$, we conclude that
		\begin{equation}\label{4pp6}
		\int_{\mathbb{R}^N}|(-\Delta)^{s/2}T_k(u_n)|^2\leq Ck.
		\end{equation}
		Therefore, $(T_k(u_n))$ is uniformly bounded in $H_0^s(\Omega)$. By following the proof of Lemma 4.1 of Panda et al. \cite{Panda 1}, we conclude that $(u_n)$ is uniformly bounded in $W_0^{\bar{s},m}(\Omega)$, for all $\bar{s}<s$ and $m<\frac{N}{N-s}$.
	\end{proof}
\end{lemma}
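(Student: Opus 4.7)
\noindent The plan is to test the weak formulation \eqref{approximating weak} against the truncation $T_k(u_n) := \min\{u_n,k\}$ to extract a linear-in-$k$ bound on $\|T_k(u_n)\|_{H_0^s(\Omega)}^2$, and then convert this into the desired $W_0^{\bar s, m}$ estimate via a Marcinkiewicz-space argument in the spirit of Boccardo--Gallou\"et and of Panda et al.~\cite{Panda 1}.

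\textbf{Step 1: linear-in-$k$ energy estimate.} Plugging $\phi = T_k(u_n)$ into \eqref{approximating weak}, I use the pointwise convexity inequality $(a-b)(T_k(a)-T_k(b)) \geq (T_k(a)-T_k(b))^2$ to get
\[
\int_{\mathbb{R}^N}(-\Delta)^{s/2}u_n\cdot(-\Delta)^{s/2}T_k(u_n) \;\geq\; \|T_k(u_n)\|_{H_0^s(\Omega)}^2.
\]
On the right, I estimate each term separately: the linear term by $\lambda\|u_n\|_{L^2(\Omega)}^2$, controlled since $\|u_n\|_{L^{2_s^*}}=1$ and $\Omega$ is bounded; the singular term by $\int_\Omega u_n^{1-\gamma}$, uniformly bounded since $1-\gamma<2_s^*$; the measure term by $k\|\mu_n\|_{L^1(\Omega)}$; the critical Choquard term, after noting $T_k(u_n)\leq u_n$, by $\beta\,C(N,b)\|u_n\|_{L^{2_s^*}}^{2\cdot 2_b^*}=\beta\,C(N,b)$ via Proposition~\ref{HLS}; and the Hardy term by $\alpha k\int_\Omega u_n/|x|^{2s}$, which I control using H\"older's inequality, observing that $N>2s$ forces $|x|^{-2s}\in L^{(2_s^*)'}(\Omega)$ with $(2_s^*)'=\tfrac{2N}{N+2s}$. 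Collecting, I obtain $\|T_k(u_n)\|_{H_0^s(\Omega)}^2 \leq Ck$ with $C$ independent of $n$ and $k$.

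\textbf{Step 2: Marcinkiewicz decay and passage to $W_0^{\bar s,m}$.} The fractional Sobolev embedding (Theorem~\ref{constant}) upgrades the $H_0^s$-bound to $\|T_k(u_n)\|_{L^{2_s^*}(\Omega)}^{2_s^*}\leq C k^{2_s^*/2}$, hence $k^{2_s^*}|\{u_n>k\}|\leq C k^{2_s^*/2}$ and the level-set decay $|\{u_n>k\}|\leq C k^{-2_s^*/2}$ uniformly in $n$, i.e.\ a uniform bound of $(u_n)$ in the Marcinkiewicz space $M^{2_s^*/2}(\Omega)$. Following the scheme of Lemma~4.1 of \cite{Panda 1}, I then split the Gagliardo integral defining $\|u_n\|_{W_0^{\bar s,m}}^m$ over the regions $\{|u_n(x)-u_n(y)|\leq k\}$ and its complement, bound the first contribution by the truncation estimate of Step~1 and the second by the Marcinkiewicz tail, and optimize in $k$. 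The restrictions $\bar s<s$ and $m<N/(N-s)$ are exactly what is needed to make the resulting dyadic sum summable.

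The main obstacle is handling the two critical terms (Hardy and Choquard) in Step~1 alongside the measure datum; both critical contributions are saved by the normalization $\|u_n\|_{L^{2_s^*}(\Omega)}=1$ built into $H$, which turns them into pure constants rather than sources of $k$-growth. After this is absorbed, the measure term is the only one that grows linearly in $k$, and the truncation--Marcinkiewicz machinery is precisely designed to convert such growth into the required sub-critical fractional differentiability and integrability.
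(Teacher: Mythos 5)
Your proposal follows essentially the same route as the paper: testing \eqref{approximating weak} with $T_k(u_n)$, using the normalization $\|u_n\|_{L^{2_s^*}(\Omega)}=1$ together with Proposition \ref{HLS} to reduce the Choquard and lower-order terms to constants, controlling the Hardy term via H\"older with $|x|^{-2s}\in L^{(2_s^*)'}(\Omega)$, and arriving at $\|T_k(u_n)\|_{H_0^s(\Omega)}^2\leq Ck$ before invoking the Boccardo--Gallou\"et/Marcinkiewicz truncation scheme of Lemma 4.1 of \cite{Panda 1}. The only difference is that you sketch the level-set decay and Gagliardo-seminorm splitting explicitly where the paper simply cites \cite{Panda 1}; the argument is correct.
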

\noindent We are now in a position to prove our main result, i.e. the existence of positive SOLA to $(P_\beta)$.
\begin{proof}[Proof of Theorem $\ref{main theorem}$]
	Let $\mu\in \mathcal{M}(\Omega)$ and the assumptions on $\gamma,b,\alpha,\lambda$ are same as provided in the statement of Theorem $\ref{main theorem}$. From Theorem $\ref{approx solution}$, for any $\beta\in (0,\bar{\beta})$, we have a positive weak solution $u_n$ of $(P_{\beta,n})$ in $H$. According to Lemma $\ref{lemma1}$, $(u_n)$ is a uniformly bounded sequence in $W_0^{\bar{s},m}(\Omega)$ for every $\bar{s}<s$ and $m<\frac{N}{N-s}$. Thus, there exist a sub sequence of $(u_n)$, still denoted as $(u_n)$, and $u\in W_0^{\bar{s},m}(\Omega)$ such that $u_n\rightarrow u$ weakly in $W_0^{\bar{s},m}(\Omega)$. This implies $u_n\rightarrow u$ a.e. in $\mathbb{R}^N$ and $u\equiv 0$ in $\mathbb{R}^N\setminus\Omega$. From the weak formulation of $(P_{\beta,n})$, i.e. from $\eqref{approximating weak}$, we have
	\begin{align}\label{approximating weak new}
	\int_{\mathbb{R}^{2N}}&\frac{(u_n(x)-u_n(y))(\phi(x)-\phi(y))}{|x-y|^{N+2s}}dxdy -\alpha\int_\Omega\frac{u_n\phi}{|x|^{2s}}\nonumber\\&=\lambda\int_\Omega u_n \phi+\int_{\Omega}\frac{1}{(u_n+\frac{1}{n})^\gamma}\phi+\beta\int_\Omega\int_{\Omega}\frac{ u_n^{2_b^*}u_n^{2_b^*-1}\phi}{|x-y|^b}dxdy+\int_{\Omega}\mu_n \phi,
	\end{align}
	for all $\phi\in C_c^\infty(\Omega)$. Proceeeding on similar lines as in Theorem 1.1. of \cite{Panda 1} and using Vitali convergence theorem, we establish
	$$\lim\limits_{n\rightarrow\infty}\int_{\mathbb{R}^{2N}}\frac{(u_n(x)-u_n(y))(\phi(x)-\phi(y))}{|x-y|^{N+2s}}dxdy=\int_{\mathbb{R}^{2N}}\frac{(u(x)-u(y))(\phi(x)-\phi(y))}{|x-y|^{N+2s}}dxdy.$$
On using the definition of convergence in measure, i.e. Definition $\ref{measure}$, Dominated convergence theorem and the fact that $\|u_n\|_{L^{2_s^*}(\Omega)}=1$, we can pass the limit $n\rightarrow \infty$ in the following integrals.
 $$\lim\limits_{n\rightarrow\infty}\int_{\Omega}\mu_n\phi=\int_{\Omega}\phi d\mu,$$
	$$\lim\limits_{n\rightarrow\infty}\beta\int_\Omega\int_{\Omega}\frac{ u_n^{2_b^*}u_n^{2_b^*-1}\phi}{|x-y|^b}dxdy=\beta\int_\Omega\int_{\Omega}\frac{ u^{2_b^*}u^{2_b^*-1}\phi}{|x-y|^b}dxdy,$$
	$$\lim\limits_{n\rightarrow\infty}\int_\Omega\frac{u_n\phi}{|x|^{2s}}=\int_\Omega\frac{u\phi}{|x|^{2s}},$$
	$$\lim\limits_{n\rightarrow\infty}\int_{\Omega}\frac{1}{(u_n+\frac{1}{n})^\gamma}\phi=\int_{\Omega}\frac{1}{u^\gamma}\phi.$$
Hence, we obtain a SOLA $u$ to $(P_\beta)$, in the sense of Definition $\ref{SOLA weak}$, as the limit of approximation in  $\eqref{approximating weak new}$. Thus, for every $\phi\in C_c^\infty(\Omega)$, $u$ satisfies
	\begin{align}
	\int_{\mathbb{R}^{2N}}&\frac{(u(x)-u(y))(\phi(x)-\phi(y))}{|x-y|^{N+2s}}dxdy-\alpha\int_\Omega\frac{u\phi}{|x|^{2s}}\nonumber\\&=\lambda\int_\Omega u \phi+\int_{\Omega}\frac{1}{u^\gamma}\phi+\beta\int_\Omega\int_{\Omega}\frac{ u^{2_b^*}u^{2_b^*-1}\phi}{|x-y|^b}dxdy+\int_{\Omega}\mu \phi.\nonumber
	\end{align}
This concludes the proof of our main result.
\end{proof}
\section*{Acknowledgement}
The author Akasmika Panda thanks the financial assistantship received from the Ministry of Human
Resource Development (M.H.R.D.), Govt. of India. Both the authors also acknowledge the facilities received from the Department of mathematics, National Institute of Technology Rourkela.
 
\end{document}